\theoremstyle{remark}
\theoremstyle{remark}
\newtheorem{example}{Example}
\declaretheorem[numberwithin = section, style = definition ]{theorem}
\declaretheorem[sibling = theorem]{lemma}
\declaretheorem[sibling = theorem]{proposition}
\declaretheorem[sibling = theorem]{remark}
\declaretheorem[sibling = theorem]{corollary}
\newlist{Claim}{description}{2}
\setlist[Claim]{labelindent=2em,leftmargin=*}
\newif\ifInsideClaim
\newcounter{claim}[theorem]
\newcounter{cclaim}[claim]
\let\originalqedsymbol\qedsymbol
    \newtheoremstyle{TheoremNum}
        {\topsep}{\topsep}              
        {\itshape}                      
        {}                              
        {\bfseries}                     
        {.}                             
        { }                             
        {\thmname{#1}\thmnote{ \bfseries #3}}
    \theoremstyle{TheoremNum}
   \newtheoremstyle{TheoremNum}
        {\topsep}{\topsep}              
        {\itshape}                      
        {}                              
        {\bfseries}                     
        {.}                             
        { }                             
        {\thmname{#1}\thmnote{ \bfseries #3}}
    \theoremstyle{TheoremNum}
\numberwithin{equation}{section} 
\newcommand{\R}{\mathbb{R}} 
\newcommand{\Z}{\mathbb{Z}}
\newcommand{\N}{\mathbb{N}}
\newcommand{\Aut}{\operatorname{Aut}}
\newcommand{\inv}{^{-1}} 
\newcommand{\vphi}{\varphi}
\newcommand{\subeq}{\subseteq}
\newcommand{\eps}{\varepsilon}
\newcommand{\Sym}{\operatorname{Sym}}
\newcommand{\bigslant}[2]{{\raisebox{.2em}{$#1$}\left/\raisebox{-.2em}{$#2$}\right.}}
\newcommand\shorttitle{The stabilized automorphism group of odometers and of Toeplitz subshifts}
\newcommand\authors{Jennifer N. Jones-Baro}
\ifodd\value{page}
\authors
\shorttitle
\title{\textbf{The stabilized automorphism group of odometers and of Toeplitz subshifts}}
\author{Jennifer N. Jones-Baro}
\affil{Department of Mathematics, Northwestern University}
\begin{document}

\maketitle

\vspace{-0.5cm} \begin{abstract}
We characterize the stabilized automorphism group for odometers and Toeplitz subshifts and then prove an invariance property of the stabilized automorphism group of these dynamical systems. Namely, we prove the isomorphism invariance of the primes for which the $p$-adic valuation of the period structure tends to infinity. A particular case of interest is that for torsion free odometers the stabilized automorphism group is a full isomorphism invariant.
 \end{abstract}
\section{Introduction}

Let $(X,T)$ be a dynamical system, that is, let $X$ be a compact metric space and $T$ a homeomorphism of $X$ to itself. An automorphism of $(X,T)$ is a homeomorphism $\vphi\colon X\to X$ that commutes with $T$. The set of all automorphisms of $(X,T)$ is a group under composition called the automorphism group of $(X,T)$, and we denote it by $\Aut(X,T)$.

The automorphism groups of symbolic systems have been studied since the 60's, starting with the works of Hedlund in \cite{Hedlund}. These groups continue to be studied extensively, see for example \cite{BLR,CyrKralinear, CyrKraexponential,ddmp,K&R,Ryan}. In particular, the automorphism group of Toeplitz subshifts has been studied by Donoso, Durand, Maass and Petite \cite{ddmpToe} and Salo \cite{Salo}. In this work, we study a larger group of symmetries called the \textit{stabilized automorphism group} for odometers and Toeplitz shifts.

The stabilized automorphism group was introduced in 2021 by Hartman, Kra and Schmieding \cite{stab}. Given $(X, T)$ a dynamical system, the stabilized automorphism group is the subgroup of $\operatorname{Homeo}(X)$ given by
$$\Aut^{(\infty)}(X,T) = \bigcup_{n=1}^\infty \Aut(X,T^n).$$
Building on partial results from \cite{stab}, Schmieding gave a full characterization of the stabilized automorphism group for shifts of finite type \cite{Pent}. Given natural numbers $m, n \geq 2$, the stabilized groups of the full $m$-shift and the full $n$-shift are isomorphic if and only if $m^k = n^j$ for some $k, j \in \N$.

We study the stabilized automorphism group of a class of dynamical systems with contrasting behavior to that studied by Schmieding. While mixing shifts of finite type have high complexity, we study odometer systems which have zero entropy. An important technique introduced in \cite{Pent} is the notion of \textit{local $\mathcal{P}$-entropy}, a quantity that captures the exponential growth rate of certain classes of finite subgroups in the limit that defines the stabilized automorphism group. These techniques however cannot be applied directly to our case since all odometers exhibit the same growth rate of finite groups in their stabilized automorphism group. Hence, local $\mathcal{P}$-entropy alone is not enough to distinguish two odometers by analyzing their stabilized automorphism group. However, we draw inspiration form this method to develop a new approach to the study of the growth of finite subgroups of the automorphism groups that define the stabilized automorphism group. In a similar way as to how the complexity function is a sequence that provides more information about a symbolic system than its limit, i.e. the topological entropy, by pinpointing the finite stages of the definition of the stabilized automorphism group where we see growth, we can recover the primes for which the $p$-adic valuation of the scale of the odometer tends to infinity. In particular, we show that for torsion free odometers, the stabilized automorphism group is a full isomorphism invariant. 

We use our results about odometers to study a class of subshifts called Toeplitz subshifts which have odometers as their maximal equicontinuous factor. These subshifts were first studied by Jacobs and Keane \cite{JacobsKeane} and have no restrictions in terms of their complexity. However, since they carry a lot of the same rigid structure of an odometer, we are able to use the results on odometers to conclude similar results about Toeplitz subshifts.

We defer the 
precise definitions and notation to Section \ref{sectionprelim}. In Section \ref{sectionodometers}, we study the stabilized automorphism group for odometers. The main result of this section is the following theorem. We use the notation  $\Sym(n)$ to represent the symmetric group on $n$ symbols.

\begin{theorem}\label{theoremodometer}
The stabilized automorphism group of an odometer $\Z_{(p_n)}$ with scale $(p_n)$ is isomorphic to the direct limit of a sequence
 of monomorphisms of groups of the form $\left(\Z_{(q_n)}\right)^{p_k}\rtimes \Sym(p_k)$ where $\Z_{(q_n)}$ is an odometer that is a factor of $\Z_{(p_n)}$ and $p_k$ is an element of the scale $(p_n)$.
\end{theorem}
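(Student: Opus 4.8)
The plan is to realize $\Aut^{(\infty)}(\Z_{(p_n)},T)$ as an increasing union of subgroups $\Aut(\Z_{(p_n)},T^{p_k})$, one for each element $p_k$ of the scale, and to compute each of these. Write $X=\Z_{(p_n)}$ and recall $\Aut^{(\infty)}(X,T)=\bigcup_{m\ge 1}\Aut(X,T^m)$. First I would cut the index set down. If $m=m_1m_2$ with $m_2$ coprime to every element of the scale, then $m_2$ is a unit in the profinite ring $\Z_{(p_n)}$, so the closed subgroups $\overline{\langle T^m\rangle}$ and $\overline{\langle T^{m_1}\rangle}$ coincide (both equal $\{T^x:x\in m_1\Z_{(p_n)}\}$); since a homeomorphism commutes with $T^m$ if and only if it commutes with every element of $\overline{\langle T^m\rangle}$ (continuity of composition on $\operatorname{Homeo}(X)$), we get $\Aut(X,T^m)=\Aut(X,T^{m_1})$. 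As $m_1$ ranges over the (finite) divisors of the supernatural number of $(p_n)$ these groups are cofinal, and since the scale elements are increasing under divisibility, $p_k\mid p_{k+1}$, and cofinal among such divisors, we obtain $\Aut^{(\infty)}(X,T)=\bigcup_k\Aut(X,T^{p_k})$ with the inclusions $\Aut(X,T^{p_k})\hookrightarrow\Aut(X,T^{p_{k+1}})$ as connecting monomorphisms.

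Next I would analyze a single system $(X,T^{p_k})$. The map $x\mapsto p_kx$ on $\Z_{(p_n)}$ is a continuous homomorphism onto the closed subgroup $p_k\Z_{(p_n)}$, satisfying $p_k(Tx)=p_kx+p_k$; moreover $p_k\Z_{(p_n)}=\varprojlim p_k\Z/p_n\Z\cong\varprojlim\Z/(p_n/p_k)\Z$ is itself an odometer $\Z_{(q_n)}$ whose supernatural number is that of $(p_n)$ divided by $p_k$, and under this identification translation by $p_k$ becomes the odometer map $S$. Hence $x\mapsto p_kx$ is a factor map $(\Z_{(p_n)},T)\to(\Z_{(q_n)},S)$, so $\Z_{(q_n)}$ is a factor of $\Z_{(p_n)}$. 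On the other hand the $T^{p_k}$-orbit closures are exactly the cosets of $p_k\Z_{(p_n)}$, of which there are $[\Z_{(p_n)}:p_k\Z_{(p_n)}]=p_k$, and on each coset $T^{p_k}$ acts by translation by $p_k$, i.e.\ as a copy of $(\Z_{(q_n)},S)$. Therefore $(X,T^{p_k})$ is topologically conjugate to the disjoint union of $p_k$ copies of the odometer $(\Z_{(q_n)},S)$ (with $(q_n)$ depending on $k$).

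From here the identification is routine. An automorphism of a disjoint union of $p_k$ mutually conjugate minimal systems permutes the minimal components and restricts on each to a conjugacy onto the image component; since any two of the copies are conjugate and the self-conjugacies of $(\Z_{(q_n)},S)$ are exactly its automorphisms, $\Aut(X,T^{p_k})\cong\Aut(\Z_{(q_n)},S)\wr\Sym(p_k)=\Aut(\Z_{(q_n)},S)^{p_k}\rtimes\Sym(p_k)$. Finally, the automorphism group of an odometer consists of the translations (any homeomorphism commuting with $S$ commutes with the transitive action of $\overline{\langle S\rangle}=\Z_{(q_n)}$, hence is a translation), so $\Aut(\Z_{(q_n)},S)\cong\Z_{(q_n)}$ and $\Aut(X,T^{p_k})\cong(\Z_{(q_n)})^{p_k}\rtimes\Sym(p_k)$. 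Taking the direct limit of the monomorphisms from the first paragraph then gives the asserted description; the connecting map at stage $k$ is the one induced by the refinement of the $T^{p_k}$-orbit decomposition by the $T^{p_{k+1}}$-orbit decomposition, each $T^{p_k}$-orbit splitting into $p_{k+1}/p_k$ of the finer ones. I expect the main obstacle to be the second paragraph — correctly identifying $p_k\Z_{(p_n)}$ with its induced dynamics as a sub-odometer that is a genuine dynamical factor, and pinning down the orbit decomposition — together with the technically delicate but conceptually mild cofinality reduction of the first paragraph; the wreath-product step and the computation of $\Aut$ of an odometer are standard.
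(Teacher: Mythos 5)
Your overall strategy is essentially the paper's: reduce the union defining $\Aut^{(\infty)}$ to the powers $T^{p_k}$ indexed by the scale, identify $(X,T^{p_k})$ as $p_k$ mutually conjugate minimal pieces each conjugate to a factor odometer, apply the semidirect-product description of the automorphism group of such a system (\autoref{semidirectproduct}), and pass to the direct limit (\autoref{algebra}, \autoref{preciseodom}). Your coset description $x+p_k\Z_{(p_n)}$ of the minimal components and your centralizer-of-the-closure argument are clean, and in the coprime case they replace the B\'ezout/continuity computation of \autoref{generalodom}(i).

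There is, however, a genuine gap in the cofinality reduction of your first paragraph when the scale has finite torsion, i.e. a prime $q$ with $0<\operatorname{\bf{v}}_q((p_n))<\infty$ (the paper excludes only finite scales, not finite torsion). Your factorization $m=m_1m_2$ with $m_2$ coprime to the scale forces $m_1$ to carry all of $\nu_q(m)$, and if $\nu_q(m)>\operatorname{\bf{v}}_q((p_n))$ then $m_1$ is not a divisor of the supernatural number and divides no $p_k$: for the scale $(2\cdot 3^n)$ and $m=4$ you get $m_1=4$, which divides no scale element, so the needed containment $\Aut(X,T^{m_1})\subseteq\Aut(X,T^{p_k})$ is not established by what you wrote, and the claim that the relevant $m_1$ range over divisors of the supernatural number is false in this case. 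The repair lies inside your own method: the closed subgroup $m\Z_{(p_n)}$ equals $s\Z_{(p_n)}$ with $s=\prod_q q^{\min\{\nu_q(m),\operatorname{\bf{v}}_q((p_n))\}}$ (multiplication by $q^{b}$ on the $q$-component $\Z/q^{\operatorname{\bf{v}}_q}\Z$ has image $q^{\min\{b,\operatorname{\bf{v}}_q\}}\Z/q^{\operatorname{\bf{v}}_q}\Z$), so your centralizer argument yields $\Aut(X,T^{m})=\Aut(X,T^{s})\subseteq\Aut(X,T^{p_k})$ for any $p_k$ divisible by $s$; this truncation of exponents is exactly the role of $r_i=\min\{b_i,\operatorname{\bf{v}}_{a_i}((p_n))\}$ in \autoref{generalodom}(ii) and of \autoref{corautodom}. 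With that single repair your argument goes through in the stated generality.
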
 A more precise description of the stabilized automorphism group of odometers including a characterization of the monomorphisms defining the limit is given in \autoref{preciseodom}. The main technical difficulty to overcome for proving this theorem is characterizing $\Aut(\Z_{(p_n)},+\textbf{m})$ for all $m\in \Z$. We do so in \autoref{generalodom}. This characterization is different from characterizing $\Aut(\Z_{(p_n)},+\textbf{1})$ for any odometer $\Z_{(p_n)}$ as that proof relies heavily on the fact that $(\Z_{(p_n)},+\textbf{1})$ is minimal. However, when $m$ divides an element of the sequence $(p_n)$ the system $(\Z_{(p_n)},+\textbf{1})$ fails to be minimal, as we show in \autoref{generalodom}.

In Section \ref{sectiontop}, we study the stabilized automorphism group of Toeplitz subshifts and prove a similar result. 

\begin{theorem}\label{theoremtoeplitz}
Let $(X,\sigma)$ be a Toeplitz subshift with period structure $(p_n)$. Then, the stabilized automorphism group of $(X,\sigma)$ is isomorphic to the direct limit of a sequence
 of monomorphisms of groups of the form $\Aut(T,\tau)^{p_k}\rtimes \Sym(p_k)$ where $(T,\tau)$ is a Toeplitz shift and $p_k$ is an element of the sequence $(p_n)$.
\end{theorem}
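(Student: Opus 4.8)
The plan is to follow the odometer blueprint: describe $\Aut(X,\sigma^m)$ for every $m\ge 1$, and then present $\Aut^{(\infty)}(X,\sigma)=\bigcup_{m}\Aut(X,\sigma^m)$ as a direct limit along a cofinal chain of exponents. Write $\pi\colon X\to\Z_{(p_n)}$ for the factor map onto the maximal equicontinuous factor. The first step is a reduction to exponents dividing some $p_k$: given $m$, write $m=de$ with $d$ the largest divisor of $m$ that divides some $p_k$ and $e$ coprime to every $p_n$. Since $e$ is a unit in $\Z_{(p_n)}$, the closure of $\{jm:j\in\Z\}$ equals $d\,\Z_{(p_n)}$, so by \autoref{generalodom} a homeomorphism of the base commuting with $+\mathbf m$ commutes with $+\mathbf d$; I would bootstrap this to $X$ using that $(X,\sigma^m)$ and $(X,\sigma^d)$ have the same minimal components (the clopen pieces described below), that on each of these $\sigma^m$ is the $e$-th power of $\sigma^d$ with $e$ coprime to that piece's period structure, and that the action of the automorphism group of a Toeplitz subshift on its maximal equicontinuous factor is faithful. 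This gives $\Aut(X,\sigma^m)=\Aut(X,\sigma^d)$, hence $\Aut^{(\infty)}(X,\sigma)=\bigcup_k\Aut(X,\sigma^{p_k})$.

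Next I would decompose $(X,\sigma^{p_k})$ for a fixed $p_k$. The $p_k$ cosets of the clopen, index-$p_k$ subgroup $p_k\,\Z_{(p_n)}$ pull back under $\pi$ to a clopen partition $X=X_0\sqcup\cdots\sqcup X_{p_k-1}$ that $\sigma$ cyclically permutes and $\sigma^{p_k}$ preserves. Each $(X_i,\sigma^{p_k})$ is an almost one-to-one extension of the odometer $p_k\,\Z_{(p_n)}\cong\Z_{(q_n)}$ with its minimal rotation, hence is conjugate to a Toeplitz subshift $(T,\tau)$, and the $X_i$ are mutually conjugate via restrictions of powers of $\sigma$; in particular they are exactly the minimal components of $(X,\sigma^{p_k})$.

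It follows that every $\phi\in\Aut(X,\sigma^{p_k})$ permutes the $X_i$, giving a homomorphism $\Aut(X,\sigma^{p_k})\to\Sym(p_k)$. This homomorphism is surjective and split: fixing conjugacies $c_i=\sigma^{i}|_{X_0}\colon(X_0,\sigma^{p_k})\to(X_i,\sigma^{p_k})$ with $c_0=\mathrm{id}$, the assignment sending $\rho\in\Sym(p_k)$ to the homeomorphism equal to $c_{\rho(i)}c_i^{-1}$ on $X_i$ is a group section. Its kernel consists of the automorphisms fixing each clopen piece setwise, and because the pieces are clopen and disjoint this kernel is canonically $\prod_i\Aut(X_i,\sigma^{p_k})\cong\Aut(T,\tau)^{p_k}$. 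Hence $\Aut(X,\sigma^{p_k})\cong\Aut(T,\tau)^{p_k}\rtimes\Sym(p_k)$.

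Finally, along a cofinal subsequence of $(p_n)$ the inclusions $\Aut(X,\sigma^{p_k})\hookrightarrow\Aut(X,\sigma^{p_{k'}})$ (whenever $p_k\mid p_{k'}$) are monomorphisms whose direct limit is $\Aut^{(\infty)}(X,\sigma)$; transporting through the isomorphisms just established presents this group as a direct limit of monomorphisms of groups of the form $\Aut(T,\tau)^{p_k}\rtimes\Sym(p_k)$, as claimed. I would then read off the connecting maps exactly as in the odometer case treated in \autoref{preciseodom}: passing from $p_k$ to $p_{k'}$ columns refines each column into $p_{k'}/p_k$ columns, so the symmetric-group parts embed according to this refinement (together with the extra cyclic permutation induced by $\sigma^{p_k}$), and the per-column automorphisms restrict along the inclusion of the finer Toeplitz slice; the combinatorics is identical to the odometer picture with the factor odometer $\Z_{(q_n)}$ replaced by $\Aut(T,\tau)$. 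The main obstacle lies in the second and third steps — establishing that each clopen slice $(X_i,\sigma^{p_k})$ is genuinely a Toeplitz subshift and that $\Aut(X,\sigma^{p_k})$ is assembled precisely from column permutations and independent per-column automorphisms — which is exactly where the rigidity inherited from the odometer factor is exploited, together with the coprime-exponent reduction of the first step that licenses restricting attention to the exponents $p_k$; the remaining direct-limit computation is then routine bookkeeping.
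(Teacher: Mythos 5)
Your proposal is correct in outline and follows the same overall architecture as the paper -- reduce the union $\bigcup_m\Aut(X,\sigma^m)$ to the exponents $p_k$, split $(X,\sigma^{p_k})$ into $p_k$ clopen minimal Toeplitz pieces cyclically permuted by $\sigma$, obtain $\Aut(X,\sigma^{p_k})\cong\Aut(T,\tau)^{p_k}\rtimes\Sym(p_k)$ (this semidirect-product step is exactly \autoref{semidirectproduct}, which you essentially re-derive), and pass to the direct limit as in \autoref{preciseodom} -- but you handle the two technical steps by genuinely different means. The paper proves the coprime-exponent invariance $\Aut(X,\sigma^e)=\Aut(X,\sigma)$ by a direct metric/B\'ezout estimate on regular points (\autoref{topgeneralcase}(i)) and identifies the pieces via the $m$-block recoding of $u$ together with Williams' skeleton lemma (\autoref{toeplitzword}, \autoref{skeletonlemma}); you instead pull back cosets under the maximal equicontinuous factor map, invoke the characterization of Toeplitz subshifts as minimal almost one-to-one symbolic extensions of odometers, and bootstrap the reduction from the odometer case (\autoref{generalodom}) using faithfulness on the factor. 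That route does work (injectivity comes from dense singleton fibers, so the commutator $\sigma^{-d}\vphi^{-1}\sigma^{d}\vphi$ is trivial once its induced base map is), and it is arguably cleaner than the paper's $\eps$-argument, but it silently uses facts that do real work and should be spelled out: that automorphisms of $(X,\sigma^m)$ descend to the odometer, which requires knowing that $\pi$ restricted to each clopen piece is the maximal equicontinuous factor map of that piece (almost one-to-one extensions are proximal), and that minimality of each piece again comes from dense singleton fibers. One genuine inaccuracy to fix: with $d$ the largest divisor of $m$ dividing some $p_k$, the cofactor $e=m/d$ is in general \emph{not} coprime to every $p_n$ and not a unit of $\Z_{(p_n)}$ when the scale has finite torsion at a prime dividing $m$ (e.g.\ $(p_n)=(2\cdot3^n)$ and $m=4$ give $d=2$, $e=2$); the closure of $m\Z$ still equals $d\Z_{(p_n)}$, but for index reasons, and the statement you actually use later -- that $e$ is coprime to the period structure of each minimal component of $(X,\sigma^d)$ -- is the correct one and is precisely how \autoref{topgeneralcase}(ii) phrases it.
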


A more precise description of the stabilized automorphism group of Toeplitz subshifts including a characterization of the monomorphisms defining the limit is given in \autoref{precisetop}. Similarly to the theorem about odometers, the main technical difficulty is characterizing $\Aut(X,\sigma^m)$ for all $m\in \Z$. We do so in \autoref{topgeneralcase}.

As an immediate corollary to the previous theorems, since amenable groups are preserved under direct limits we have the following.
\begin{corollary}\label{amenable}
Both the stabilized automorphism group of an odometer and the stabilized 
automorphism of a Toeplitz subshift are ameanable.
\end{corollary}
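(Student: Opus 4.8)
The plan is to read the corollary off the two structure theorems, using only two standard permanence properties of the class of (discrete) amenable groups. First, amenability is closed under group extensions: if $1\to N\to G\to Q\to 1$ is exact with $N$ and $Q$ amenable, then $G$ is amenable; in particular a semidirect product $A\rtimes B$ is amenable whenever $A$ and $B$ are, and since finite groups and abelian groups are amenable, every group of the form $A^{m}\rtimes\Sym(m)$ with $A$ amenable is amenable. Second, amenability is closed under increasing unions: if $G=\bigcup_{k} G_k$ with each $G_k$ an amenable subgroup, then $G$ is amenable, because every finitely generated subgroup of $G$ lies in some $G_k$ and is therefore amenable, and a group all of whose finitely generated subgroups are amenable is itself amenable. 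Both structure theorems present the stabilized automorphism group as the direct limit of a sequence of monomorphisms, i.e. as such an increasing union, so it suffices to show that each building-block group is amenable.

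For the odometer case this is immediate: by \autoref{theoremodometer}, $\Aut^{(\infty)}(\Z_{(p_n)})$ is a direct limit of groups of the form $\left(\Z_{(q_n)}\right)^{p_k}\rtimes\Sym(p_k)$, and the underlying group of an odometer is abelian (it is an inverse limit of finite cyclic groups); hence $\left(\Z_{(q_n)}\right)^{p_k}$ is abelian, so amenable, and by the two facts above each building block, and therefore the direct limit, is amenable.

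The Toeplitz case is handled identically, using \autoref{theoremtoeplitz}, once we know that $\Aut(T,\tau)$ is amenable for each Toeplitz shift $(T,\tau)$ occurring as a building block. In fact the automorphism group of \emph{every} Toeplitz subshift is abelian: its maximal equicontinuous factor is an odometer $\mathcal{O}$, the factor map $\pi\colon T\to\mathcal{O}$ is almost one-to-one and therefore injective on a dense $G_\delta$, and, the maximal equicontinuous factor being canonical, each $\vphi\in\Aut(T,\tau)$ satisfies $\pi\vphi=\bar\vphi\,\pi$ for a unique $\bar\vphi\in\Aut(\mathcal{O},+1)$; the induced homomorphism $\Aut(T,\tau)\to\Aut(\mathcal{O},+1)$ is injective, because an automorphism inducing the identity on $\mathcal{O}$ fixes a dense $G_\delta$ and hence, being continuous, is the identity, while $\Aut(\mathcal{O},+1)$ consists solely of translations and so is isomorphic to the abelian group $\mathcal{O}$. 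Thus $\Aut(T,\tau)^{p_k}\rtimes\Sym(p_k)$ is amenable, and hence so is the direct limit. (One could equally invoke the explicit description of the building-block systems in \autoref{preciseodom} and \autoref{precisetop}.) In short, given the structure theorems the corollary really is immediate; the only steps with any content are the two permanence properties above and the classical fact that the automorphism group of a Toeplitz subshift embeds into its maximal equicontinuous factor, and it is this last point that I would single out as the one place where a short argument is needed.
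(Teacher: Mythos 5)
Your proposal is correct and follows essentially the same route as the paper: the paper likewise deduces the corollary directly from Theorems \ref{theoremodometer} and \ref{theoremtoeplitz} (equivalently \ref{preciseodom} and \ref{precisetop}), using that amenability passes to direct limits and that each building block $A^{p_k}\rtimes\Sym(p_k)$ is abelian-by-finite, with the abelianness of $\Aut(T,\tau)$ supplied by the standard embedding into the maximal equicontinuous odometer factor, which the paper quotes as a lemma rather than re-deriving as you sketch.
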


Odometers are completely classified by an equivalence relation on their scale (see \cite{DownSur}). Let $(p_n)$ be the scale of an odometer. For each prime number $p$, denote by $\nu_p(n)$ the p-adic valuation of the integer $n$, i.e.
$\nu_p(n)=\max\{k\geq 0: p^k \text{ divides } n\}.$ For each prime the multiplicity function at $p$ of the scale $(p_n)$ is given by
$\operatorname{\bf{v}}_p(p_n)=\lim_{n\to \infty}\nu_p(p_n).$ Two scales $(p_n)$ and $(s_n)$ are equivalent if and only if $ \operatorname{\bf{v}}_p(p_n)=\operatorname{\bf{v}}_p(s_n) \text{ for all primes } p$. Two odometers are isomorphic if and only if their scales are equivalent. In Section \ref{sectioninva}, we study the finite subgroups at each level of the sequences in Theorems \ref{theoremodometer} and \ref{theoremtoeplitz} to prove the isomorphism invariance of the primes for which the the multiplicity function at $p$ is infinite. We use this to derive our main invariance results.

\begin{theorem}
 \label{finalcor1}
Let $(\Z_{(p_n)},+\textbf{1})$ and $(\Z_{(q_n)},+\textbf{1})$ be torsion free odometers with scales $(p_n)$ and $(q_n)$ respectively. If $\Aut^{(\infty)}(\Z_{(p_n)},+\textbf{1})$ and $\Aut^{(\infty)}(\Z_{(q_n)},+\textbf{1})$ are isomorphic as groups then $\Z_{(p_n)}$ and $\Z_{(q_n)}$ are isomorphic as groups.
\end{theorem}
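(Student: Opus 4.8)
The plan is to extract, from the group‑theoretic data of $\Aut^{(\infty)}$, enough combinatorial invariants to recover the equivalence class of the scale, since by the Downarowicz–Sur classification two torsion free odometers are isomorphic iff their scales $(p_n)$ and $(q_n)$ are equivalent, i.e.\ $\operatorname{\bf v}_p((p_n))=\operatorname{\bf v}_p((q_n))$ for every prime $p$. By Theorem \ref{theoremodometer} (and the more precise \autoref{preciseodom}), $\Aut^{(\infty)}(\Z_{(p_n)},+\textbf{1})$ is the direct limit of a chain of monomorphisms $G_k\hookrightarrow G_{k+1}$ with $G_k\cong (\Z_{(q_n^{(k)})})^{p_k}\rtimes\Sym(p_k)$, where $\Z_{(q_n^{(k)})}$ is a factor of $\Z_{(p_n)}$. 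Since the odometer is torsion free (so $(p_n)$ contains each prime to unbounded power whenever it appears — more precisely no $\operatorname{\bf v}_p$ equals a finite nonzero value in the way that would create torsion; I should pin down exactly which torsion‑freeness hypothesis is being used, as it is the cleanliness of this that makes the factors $\Z_{(q_n^{(k)})}$ behave well), the factor odometers appearing are again torsion free and in fact have scales equivalent to $(p_n)$ up to removing finitely many prime powers dividing $p_k$.

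The first step is to read off the torsion subgroup structure: the torsion elements of $G_k$ all lie in conjugates of the $\Sym(p_k)$ factors, so the directed system of finite subgroups recovers the sequence of symmetric groups $\Sym(p_k)$, hence the sequence $(p_k)$ up to passing to a subsequence. This is exactly the ``sequence of finite groups at each finite stage'' philosophy flagged in the introduction, and it is purely group‑theoretic: the finite subgroups of a direct limit, organized by the poset of inclusions, are an isomorphism invariant of the abstract group. From the multiset of primes dividing the $p_k$'s together with their multiplicities as $k\to\infty$ one recovers each multiplicity function $\operatorname{\bf v}_p((p_n))$ — here I would use that the scale is determined, as an equivalence class, by the ``eventual'' $p$‑adic valuations, so a subsequence suffices.

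The second step handles the case $\operatorname{\bf v}_p((p_n))=\infty$ versus finite: when $\operatorname{\bf v}_p((p_n))<\infty$ for some prime $p$, that information must still be recoverable, and here one cannot see it in the $\Sym(p_k)$ alone but must look at the abelian part — the identity component / the maximal ``divisible‑like'' abelian normal structure of $G_k$ records the factor odometer $\Z_{(q_n^{(k)})}$, whose scale is $(p_n)$ with the power of $p$ in $p_k$ divided out; comparing across $k$ recovers $\operatorname{\bf v}_p$ exactly. So I would prove: (i) from $\Aut^{(\infty)}$ one reconstructs, for each prime $p$, the value $\operatorname{\bf v}_p((p_n))\in\{0,1,2,\dots,\infty\}$; (ii) these values agree for $(p_n)$ and $(q_n)$ by the isomorphism hypothesis; (iii) apply Downarowicz–Sur.

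The main obstacle I expect is step (i): cleanly characterizing, in purely abstract‑group terms, the subgroups of the direct limit that correspond to the $G_k$'s and to their abelian parts $\Z_{(q_n^{(k)})}$, so that the reconstruction is genuinely an invariant rather than something read off a particular presentation. Concretely, the difficulty is that the maps $G_k\hookrightarrow G_{k+1}$ are ``wreath‑like'' embeddings ($g\mapsto(g,\dots,g)$ composed with the inclusion $\Sym(p_k)\hookrightarrow\Sym(p_{k+1})$), so a finite subgroup of the limit sits inside many $G_k$'s in non‑canonical ways; I will need \autoref{preciseodom} to describe these embeddings precisely and then argue that the lattice of finite subgroups, together with something like the growth rate of their orders (the local‑$\p$‑entropy surrogate discussed in the introduction) and the structure of their centralizers inside the limit, pins down both the sequence $(p_k)$ and the ambient odometer up to the equivalence of scales. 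Torsion‑freeness of the odometer is what guarantees there is no ``extra'' finite abelian torsion polluting this picture, which is why the theorem is stated in that generality.
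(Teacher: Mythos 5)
Your step (i) is where the whole theorem lives, and the mechanism you propose for it --- recovering the sequence $(p_k)$ from the collection of finite subgroups of the abstract group, organized by inclusion and by the growth of their orders --- cannot work. Since $p_k\to\infty$, we have $\Sym(m)\leq\Sym(p_k)\leq\Aut(\Z_{(p_n)},+\textbf{p}_k)$ for every $m\leq p_k$, so the stabilized automorphism group of \emph{every} infinite odometer contains a copy of every finite group; conversely, because the base $\Z_{(p_{n+k}/p_k)}$ is torsion free, every finite subgroup of $\Aut(\Z_{(p_n)},+\textbf{p}_k)$ projects injectively into $\Sym(p_k)$. Hence for, say, $\Z_{(2^n)}$ and $\Z_{(3^n)}$ the isomorphism types of finite subgroups and their inclusion structure are the same, and no growth rate of finite subgroups distinguishes them --- this is exactly the obstruction the introduction flags when it says local $\mathcal{P}$-entropy alone cannot separate odometers. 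Note also that the filtration of the direct limit by the stages $G_k$ is not intrinsic to the abstract group, so ``the largest finite subgroup at stage $k$'' is not an invariant without further input, and your fallback for finite versus infinite $\textbf{v}_p$ (``identity component'', ``maximal divisible-like abelian normal structure'') is a topological notion, not one you have made available in a purely algebraic setting; in the torsion free case it is also unnecessary, since multiplicities lie in $\{0,\infty\}$.

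The paper closes this gap by using the leveled structure rather than a presentation-free reconstruction: it fixes the distinguished element $+\textbf{1}$ and its image $\gamma=\vphi(+\textbf{1})$ under the given isomorphism, and works with the invariant $F_{n}(G,g)$, the supremum of orders of finite subgroups of the centralizer $C(g^{n})$ inside the stabilized group, which is preserved by any isomorphism carrying $+\textbf{1}$ to $\gamma$ (\autoref{preservepgrowth}). Since $\Aut(\Z_{(p_n)},+\textbf{m})=C(+\textbf{m})$ inside $\Aut^{(\infty)}$, one can test, prime by prime, whether multiplying the exponent by $s$ strictly enlarges this centralizer: by \autoref{lemmaforinva}, when $\textbf{v}_s=\infty$ one always gains a new permutation-type element and hence a strictly larger finite subgroup of the centralizer, whereas if $\textbf{v}_s(p_n)=\ell<\infty$ the centralizer stabilizes once the exponent contains $s^{\ell}$; transporting this through $\vphi$ gives the contradiction in \autoref{inva}, and torsion freeness then yields scale equivalence and \autoref{coro2}. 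Your proposal mentions centralizers only in passing and never develops this detection mechanism, so as written it does not resolve the obstacle it correctly identifies as the main one.
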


\begin{theorem}
 \label{finalcor2}
Let $(X,\sigma)$ and $(T,\tau)$ be torsion-free Toeplitz subshifts with scales $(p_n)$ and $(q_n)$ respectively. If $\Aut^{(\infty)}(X,\sigma)$ and $\Aut^{(\infty)}(T,\tau)$ are isomorphic as groups, then $(p_n)$ is equivalent to $(q_n)$.
\end{theorem}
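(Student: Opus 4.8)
The plan is to show that, for a torsion-free system, the abstract isomorphism type of $\Aut^{(\infty)}(X,\sigma)$ determines the supernatural number $\xi = \prod_p p^{\mathbf v_p((p_n))}$. For torsion-free systems $\mathbf v_p((p_n))\in\{0,\infty\}$ for every prime $p$, so $\xi$ is exactly the data of the equivalence class of the scale; hence an isomorphism $\Aut^{(\infty)}(X,\sigma)\cong\Aut^{(\infty)}(T,\tau)$ will force $\xi((p_n)) = \xi((q_n))$, i.e. $(p_n)\sim(q_n)$.

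First I would extract from $\Aut^{(\infty)}(X,\sigma)$ a canonical locally finite subgroup. By Theorem~\ref{theoremtoeplitz} and its precise version \autoref{precisetop}, write $\Aut^{(\infty)}(X,\sigma) = \varinjlim_k G_k$ with $G_k = \Aut(T_k,\tau_k)^{p_k}\rtimes\Sym(p_k)$, the torsion-freeness hypothesis guaranteeing (via \autoref{topgeneralcase}) that each $\Aut(T_k,\tau_k)$ is torsion-free. Since a locally finite torsion-free group is trivial, every locally finite subgroup $H\le G_k$ meets $\Aut(T_k,\tau_k)^{p_k}$ trivially, hence embeds through the quotient $G_k\to\Sym(p_k)$ and is finite, with $\{1\}\rtimes\Sym(p_k)$ a maximal finite subgroup. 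Moreover, reading the connecting maps off \autoref{precisetop}, the inclusion $G_k\hookrightarrow G_{k+1}$ carries $\{1\}\rtimes\Sym(p_k)$ onto a strictly diagonal copy of $\Sym(p_k)$ inside $\Sym(p_{k+1})$ of ratio $p_{k+1}/p_k$. Consequently any locally finite $H\le\Aut^{(\infty)}(X,\sigma)$ satisfies $H = \bigcup_k(H\cap G_k)$ with $H\cap G_k\hookrightarrow\Sym(p_k)$ compatibly, so $H$ embeds into $L := \varinjlim\bigl(\Sym(p_k),\ \text{strictly diagonal embeddings}\bigr)$; and $L$ is realized inside $\Aut^{(\infty)}(X,\sigma)$ as $\bigcup_k\{1\}\rtimes\Sym(p_k)$. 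A maximality argument ($L$ meets each $G_k$ in a subgroup of the maximal possible order $p_k!$, so nothing can be added) together with a classification, up to conjugacy in $G_k$ and stable under passing to $G_{k+1}$, of the complements to $\Aut(T_k,\tau_k)^{p_k}$ --- needed so that ``twisted'' finite subgroups do not spoil the picture --- pins $L$ down intrinsically from the abstract group $\Aut^{(\infty)}(X,\sigma)$.

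It remains to recover $\xi$ from $L$. As emphasized in the introduction, the exponential growth rate of finite subgroups (the local $\mathcal P$-entropy of \cite{Pent}) is the same for every unbounded scale and so cannot detect $\xi$; one must use the finer information carried by the whole chain $\Sym(p_1)\hookrightarrow\Sym(p_2)\hookrightarrow\cdots$ of finite stages, for instance the divisibility of orders of finite subgroups arising as iterated centralizers, or the behaviour of the sign and alternating quotients along the chain, which together reconstruct each valuation $\mathbf v_p((p_n))$. This is precisely a rigidity statement for direct limits of finite symmetric groups under strictly diagonal embeddings, established in Section~\ref{sectioninva}. Equivalently, since the torsion-free factors $\Aut(T_k,\tau_k)$ contribute neither new finite subgroups nor new centralizer data, the canonical subgroup $L$ coincides with that of $\Aut^{(\infty)}(\Z_{(p_n)},+\mathbf{1})$, so one shows that $\Aut^{(\infty)}(X,\sigma)$ determines $\Aut^{(\infty)}(\Z_{(p_n)},+\mathbf{1})$ up to isomorphism and then applies Theorem~\ref{finalcor1} to the torsion-free odometers that are the maximal equicontinuous factors. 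In either formulation the main obstacle is this rigidity step --- proving that the isomorphism type of $L$ (equivalently, of the maximal locally finite part of the stabilized group) remembers the supernatural number --- together with the conjugacy bookkeeping that makes $L$ intrinsic; everything else is formal manipulation of Theorems~\ref{theoremtoeplitz} and~\ref{finalcor1}.
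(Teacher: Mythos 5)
Your reduction has the right flavor (detecting each valuation $\mathbf v_p$ from finite subgroups inside the limit), but as written it has two genuine gaps, and it is not the paper's argument. First, the load-bearing step --- that the isomorphism type of $L=\varinjlim\Sym(p_k)$ under strictly diagonal embeddings remembers the supernatural number $\prod_p p^{\mathbf v_p((p_n))}$ --- is asserted and attributed to Section~\ref{sectioninva}, but no such rigidity theorem for limits of symmetric groups appears there (it is a nontrivial classification result in its own right, and ``divisibility of orders of finite subgroups'' alone cannot give it: the orders $p_k!$ along the chain see only the growth of $p_k$, which, as the Limitations subsection stresses, does not separate scales). Second, the claim that $L$ is an \emph{intrinsic} subgroup that any abstract isomorphism must respect is not established: you would need (a) that $\{1\}\rtimes\Sym(p_k)$ is carried into $\{1\}\rtimes\Sym(p_{k+1})$ by the connecting maps of \autoref{precisetop}, which is not automatic (the induced permutation of the $\sigma^{p_{k+1}}$-minimal components of an element of $\Aut(X,\sigma^{p_k})$ depends on its action inside each $\sigma^{p_k}$-component and on the chosen conjugacies, so the image generally picks up nontrivial coordinates in $\Aut(T_{k+1},\tau_{k+1})^{p_{k+1}}$), and (b) a conjugacy classification of finite subgroups/complements that you explicitly defer. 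Your fallback (``recover $\Aut^{(\infty)}(\Z_{(p_n)},+\mathbf 1)$ and apply Theorem~\ref{finalcor1}'') is circular in the same way: the locally finite part does not determine the stabilized group of the odometer without the very rigidity statement at issue.

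The paper avoids all of this by never trying to make a canonical subgroup out of the abstract group alone; instead it tracks the distinguished element. It works with leveled groups $(G,g)$ and the quantities $F_n(G,g)$, the maximal order of a finite subgroup of the centralizer $C(g^n)$, which \autoref{preservepgrowth} shows are transported by any isomorphism $\vphi$ once one replaces $\sigma$ by $\vphi(\sigma)$. Then, for a prime $s$ with $\mathbf v_s((p_n))<\infty$ but $\mathbf v_s((q_n))=\infty$, \autoref{corolary2} (the Toeplitz analogue of \autoref{lemmaforinva}) produces an element commuting with a suitable power $\vphi(\sigma)^{jys}$ but not with $\vphi(\sigma)^{jy}$, forcing $F_{jys}>F_{jy}$ on one side, while \autoref{topgeneralcase} forces $\Aut(X,\sigma^{jys})=\Aut(X,\sigma^{jy})$, hence $F_{jys}=F_{jy}$, on the other --- a contradiction; running this over all primes and both directions gives $(p_n)\sim(q_n)$ in the torsion-free case. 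If you want to salvage your route, you would have to either prove (or correctly cite) the classification of strictly diagonal limits of symmetric groups by their Steinitz number \emph{and} prove that the maximal locally finite subgroups of $\Aut^{(\infty)}(X,\sigma)$ are intrinsic up to isomorphism; as it stands, those two steps are missing.
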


\subsection{Acknowledgments}
This material is based upon work supported by the National Science Foundation Graduate Research Fellowship under Grant No. DGE-1842165. The author is also grateful for the helpful discussions and feedback received throughout this project from Bryna Kra and Scott Schmieding. They would also like to thank the referee for their careful and insightful review of this paper, and for the comments, corrections and suggestions they made that greatly improved this work. Additionally, they would like to thank 
Kaitlyn Loyd, Nick Lohr, Nir Avni, Stephan Snegirov, Bastián Espinoza and Adam Holeman for their helpful comments.

\section{Preliminaries}\label{sectionprelim}
\subsection{Background}

A \textit{topological dynamical system} (or simply a \textit{system}) is a pair $(X,T)$ where $X$ is a compact metric space with metric $\operatorname{d}\colon X\times X\to \R$ and $T\colon X\to X$ is a homeomorphism. In the particular case when $X$ is a compact topological group and $T$ acts by group translation by a fixed element in $X$, we call the dynamical system $(X,T)$ a \textit{group rotation}. The \textit{orbit of a point} $x\in X$ is denoted by $\mathcal{O}_T(x)=\{T^n(x): n\in \Z\}$. Given a subset $U\subseteq X$ we define $\mathcal{O}_T(U)=\bigcup\limits_{x\in U}\mathcal{O}_T(x)$. A system is minimal if the orbit of every point $x\in X$ is dense in $X$. A subset $U\subseteq X$ is called a \textit{minimal component} of $(X,T)$ if $U$ is closed, $T$-invariant and the restriction of $T$ to $U$ makes $(U,T|_U)$ a minimal system.

Given two topological dynamical systems $(X,T)$, $(Y,S)$ a continuous surjection $\pi\colon X\to Y$ such that $\pi\circ T=S\circ \pi$ is called a \textit{factor map}. If such map exists, we say $(Y,S)$ is a factor of $(X,T)$. If in addition $\pi$ is a bijection, we say $(X,T)$ and $(Y,S)$ are \textit{conjugate} systems.

Let $G$ and $H$ be two topological groups. We say that $G$ and $H$ are \textit{isomorphic as topological groups} if there exists a group isomorphism $\phi \colon G \to H$ that is also a homeomorphism. Not all group isomorphism are necessarily topological isomorphism and to avoid confusion, we refer to a usual group isomorphism as an \textit{algebraic isomorphism} and denote it with the symbol $\cong$. Moreover, two group rotations $(G,g)$ and $(H,h)$ are conjugate if and only if there exists a topological isomorphism $\phi \colon G \to H$ such that $\phi(g)=\phi(h)$.

We say the system $(X,T)$ is \textit{equicontinuous} if for any $\eps>0$ there exists $\delta>0$ such that if $\operatorname{d}(x,y)\leq\delta$ for $x,y\in X$ then for any $n\in \Z$ we have $\operatorname{d}(T^n(x),T^n(y))\leq \eps$. Every minimal equicontinuous system is conjugate to a group rotation (see for example \cite{kurka}).

\subsection{Automorphism group and stabilized automorphism group}

An \textit{automorphism} of a system $(X,T)$ is a homeomorphism $\vphi$ of $X$ such that $\vphi\circ T=T\circ \vphi$. The set of all automorphisms of $X$ forms a group under composition which we denote by $\Aut(X,T)$ and call the \textit{automorphism group} of $(X,T)$. A commonly used result in the literature is the following lemma. We include the proof for completeness.

\begin{lemma}\label{minimalcomponents}
Let $(X,T)$ be a dynamical system and $\vphi\in\Aut(X,T)$. Then, $U\subseteq X$ is a minimal component of $(X,T)$ if and only if $\vphi (U)$ is a minimal component.
\end{lemma}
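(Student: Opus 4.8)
The plan is to prove both directions at once using the characterization of minimality via denseness of orbits, together with the fact that $\vphi$ is a homeomorphism commuting with $T$. Since $\vphi^{-1}$ is also an automorphism of $(X,T)$, it suffices to prove one implication, say that if $U$ is a minimal component then $\vphi(U)$ is a minimal component; applying this to $\vphi^{-1}$ and $\vphi(U)$ gives the converse.

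So assume $U \subseteq X$ is a minimal component, i.e.\ $U$ is closed, $T$-invariant, and $(U, T|_U)$ is minimal. First I would check that $\vphi(U)$ is closed: this is immediate since $\vphi$ is a homeomorphism, hence a closed map. Next, $T$-invariance: since $\vphi$ commutes with $T$ we have $T(\vphi(U)) = \vphi(T(U)) = \vphi(U)$, using $T(U) = U$. (One should note $T$-invariance here means $T(U) = U$, not merely $T(U) \subseteq U$, which holds because $T$ is a homeomorphism and $U$ is also $T^{-1}$-invariant.) The remaining point is that $T|_{\vphi(U)}$ is minimal. For this I would take any $y \in \vphi(U)$, write $y = \vphi(x)$ with $x \in U$, and show $\mathcal{O}_T(y)$ is dense in $\vphi(U)$. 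Indeed $\mathcal{O}_T(y) = \{T^n \vphi(x) : n \in \Z\} = \{\vphi(T^n x) : n \in \Z\} = \vphi(\mathcal{O}_T(x))$. Since $(U, T|_U)$ is minimal, $\mathcal{O}_T(x)$ is dense in $U$, and since $\vphi$ restricts to a homeomorphism $U \to \vphi(U)$ (a continuous open bijection onto its image), it maps dense subsets of $U$ to dense subsets of $\vphi(U)$. Hence $\overline{\mathcal{O}_T(y)} = \overline{\vphi(\mathcal{O}_T(x))} = \vphi(\overline{\mathcal{O}_T(x)}) = \vphi(U)$, so the orbit of every point of $\vphi(U)$ is dense in $\vphi(U)$, i.e.\ $(\vphi(U), T|_{\vphi(U)})$ is minimal.

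This establishes that $\vphi(U)$ is a minimal component. For the converse, apply what was just shown to the automorphism $\vphi^{-1} \in \Aut(X,T)$ and the minimal component $\vphi(U)$: we conclude $\vphi^{-1}(\vphi(U)) = U$ is a minimal component, which is what we wanted. There is no real obstacle here; the only mild subtlety worth stating explicitly is the interplay between $\vphi$ being a homeomorphism of all of $X$ and its restriction to the invariant set being a homeomorphism onto its image, which is what lets us push forward and pull back density and closedness. The argument is short and I would present it essentially at the level of detail above.
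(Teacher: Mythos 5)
Your proposal is correct and follows essentially the same route as the paper's proof: show $\vphi(U)$ is closed, $T$-invariant, and that orbits of points $\vphi(x)$ are dense in $\vphi(U)$ because $\vphi$ carries $\mathcal{O}_T(x)$ to $\mathcal{O}_T(\vphi(x))$ and preserves closures, then handle the converse by applying the same argument to $\vphi^{-1}$. You are in fact slightly more explicit than the paper about why density pushes forward under the homeomorphism, which is fine.
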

\begin{proof}
Let $U\subseteq X$ be a minimal component of $(X,T)$ and let $\vphi\in\Aut(X,T)$. Since $\vphi$ is a homeomorphism of $X$, $\vphi(U)$ is closed. Additionally, since $U$ is $T$-invariant we have that $T(\vphi(U))=\vphi(T(U))\subseteq\vphi(U)$. Hence, $\vphi(U)$ is $T$-invariant. Take $y\in \vphi(U)$. Since $\vphi$ is a bijection, there exists $x\in U$ with $\vphi(x)=y$ and since $U$ is a minimal component $\overline{\mathcal{O}}_T(x)=U$. Because $\vphi$ is an automorphism of $(X,T)$, we have that $\overline{\mathcal{O}}_T(y)=\overline{\mathcal{O}}_T(\vphi(x))=\vphi(U)$. We conclude $\vphi(U)$ is a minimal component. To show that $\vphi\inv(U)$ is a minimal component we repeat the proof with $\vphi\inv$ instead of $\vphi$.
\end{proof}

As introduced by Hartman, Kra and Shmieding in \cite{stab}, for $(X,T)$ a a dynamical system, we define the \textit{stabilized automorphism group} of $(X,T)$ to be the subgroup of $\operatorname{Homeo}(X)$ given by 
$$\Aut^{(\infty)}(X,T)=\bigcup_{n=1}^\infty\Aut(X,T^n).$$

\begin{remark}\label{tree}
It is obvious that if $i$ divides   $j$ then $\Aut(X,T^i)\subseteq \Aut(X,T^j)$. The stabilized automorphism group is equivalently defined as the direct limit (colimit in the categorical sense) of the following diagram where the arrows represent inclusions.
\end{remark}
\begin{center}
    \includegraphics[scale=0.15]{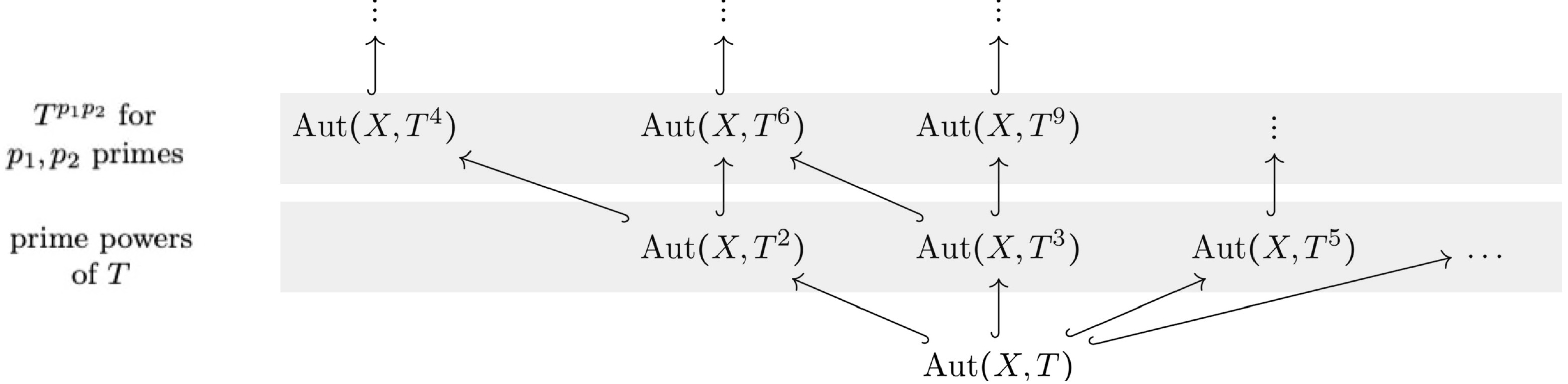}
    \captionof{figure}{}
\end{center}

\begin{proposition}
\label{semidirectproduct}
Let $(X,T)$ be a minimal dynamical system. Assume that for $k>1$ we have that $(X,T^k)$ has $n>1$ minimal components $U_1, U_2,..., U_n$ such that $X=\bigcup_{i=1}^n U_i$. If the dynamical systems $(U_i, T^k|_{U_i})$ are conjugate for $i=1,2,...,n$ then there exists an algebraic group isomorphism $$\chi\colon\Aut(X,T^k)\to [\Aut(U_1, T^k|_{U_1})]^n\rtimes \Sym(n) $$ where $\Sym(n)$ is the symmetric group on $n$ symbols, satisfying for all $\vphi,\phi\in \Aut(X,T^k)$ with $\chi(\vphi)=((a_1,a_2,...,a_n),\pi_1)$ and $\chi(\phi)=((b_1,b_2,...,b_n),\pi_2) $
\begin{align}
\begin{split}
    \chi(\vphi\circ\phi)&=((a_1,a_2,...,a_n),\pi_1)\cdot((b_1,b_2,...,b_n),\pi_2) 
    \\&=(\pi_2\inv(a_1,a_2,...,a_n)(b_1,b_2,...,b_n),\pi_1\circ\pi_2).\label{multiplication}
    \end{split}
\end{align}
where $\pi_2\inv(a_1,a_2,...,a_n)=(a_{\pi_2\inv(1)},a_{\pi_2\inv(2)},...,a_{\pi_2\inv(n)})$ and $\pi_1\circ\pi_2$ denotes the composition of the composition of functions (as opposed to cycle concatenation).
\end{proposition}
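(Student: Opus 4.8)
The plan is to present each $\vphi\in\Aut(X,T^k)$ as the combination of a permutation of the clopen pieces $U_1,\dots,U_n$ with, in each piece, an automorphism of $(U_1,T^k|_{U_1})$, after identifying every piece with $U_1$ through a fixed conjugacy. I would begin with two structural observations about the partition. First, distinct minimal components of $(X,T^k)$ are disjoint, since their intersection is a closed $T^k$-invariant set properly contained in a minimal set; as $X=\bigcup_i U_i$ is compact this makes $\{U_1,\dots,U_n\}$ a clopen partition of $X$. Second, because $T$ commutes with $T^k$ it lies in $\Aut(X,T^k)$, so by Lemma~\ref{minimalcomponents} it permutes the $U_i$, and minimality of $(X,T)$ forces this permutation to be an $n$-cycle; after reindexing we may take $TU_i=U_{i+1}$ (indices mod $n$) and set $h_i:=T^{\,i-1}|_{U_1}\colon U_1\to U_i$, which is a conjugacy $(U_1,T^k|_{U_1})\to(U_i,T^k|_{U_i})$ with $h_1=\mathrm{id}$ (any conjugacies furnished by the hypothesis would serve as well).

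Next I would construct $\chi$. By Lemma~\ref{minimalcomponents} every $\vphi\in\Aut(X,T^k)$ permutes $\{U_1,\dots,U_n\}$; let $\pi_\vphi\in\Sym(n)$ be determined by $\vphi(U_i)=U_{\pi_\vphi(i)}$, and observe that $\vphi\mapsto\pi_\vphi$ is a homomorphism into $\Sym(n)$ since $(\vphi\circ\phi)(U_i)=U_{\pi_\vphi(\pi_\phi(i))}$. For each $i$ set $a^\vphi_i:=h_{\pi_\vphi(i)}^{-1}\circ\vphi|_{U_i}\circ h_i\colon U_1\to U_1$; each of the three factors intertwines the appropriate restriction of $T^k$, so $a^\vphi_i\in\Aut(U_1,T^k|_{U_1})$. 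Put $\chi(\vphi):=\big((a^\vphi_1,\dots,a^\vphi_n),\pi_\vphi\big)$.

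Then I would verify that $\chi$ is the desired isomorphism. Identifying $X$ with $U_1\times\{1,\dots,n\}$ via $(u,i)\mapsto h_i(u)$ turns $\vphi$ into the map $(u,i)\mapsto(a^\vphi_i(u),\pi_\vphi(i))$; composing two such descriptions reproduces the product rule \eqref{multiplication} (the $\Sym(n)$-factor multiplies by composition, and the coordinate family is acted on by the permutation of the second factor as recorded there), so $\chi$ is a homomorphism. It is injective because if $\chi(\vphi)$ is trivial then $\vphi$ fixes each $U_i$ setwise and $\vphi|_{U_i}=h_i\circ\mathrm{id}\circ h_i^{-1}=\mathrm{id}$, whence $\vphi=\mathrm{id}_X$. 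It is surjective: given $((a_1,\dots,a_n),\pi)$, define $\vphi$ on the clopen set $U_i$ by $\vphi|_{U_i}:=h_{\pi(i)}\circ a_i\circ h_i^{-1}$; since $\pi$ is a bijection and the $U_i$ form a clopen partition this glues to a homeomorphism of $X$, it commutes with $T^k$ on every $U_i$ because each factor in the formula intertwines the relevant restriction of $T^k$, and by construction $\chi(\vphi)=((a_i),\pi)$.

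I expect the main obstacle to be the index bookkeeping in establishing \eqref{multiplication} — tracking the direction in which the permutation of one factor acts on the coordinate family of the other, and the convention for composition in $\Sym(n)$ — together with the verification that the piecewise map built for surjectivity is genuinely an automorphism of $(X,T^k)$; both reduce to the structural facts isolated at the outset, namely that $\{U_1,\dots,U_n\}$ is a clopen partition and that the maps $h_i$ intertwine the relevant restrictions of $T^k$.
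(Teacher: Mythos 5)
Your construction is essentially the paper's: both decompose an automorphism into the permutation it induces on the minimal components (Lemma \ref{minimalcomponents}) together with component-wise automorphisms obtained by identifying every $U_i$ with $U_1$ through fixed conjugacies. The paper packages this as a split short exact sequence $1\to \Aut(U_1,T^k|_{U_1})^n\to\Aut(X,T^k)\to\Sym(n)\to 1$ with section $\pi\mapsto\Phi_\pi$ and asserts that the multiplication rule ``follows immediately''; you instead build $\chi$ explicitly and verify injectivity, surjectivity and the product rule by hand, and you get the conjugacies for free as restrictions of powers of $T$ (a nice extra: since $X=\bigcup_i U_i$ and $(X,T)$ is minimal, $T$ must cycle the components, so the conjugacy hypothesis is in fact automatic). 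One point needs attention: the rule your $\chi$ actually satisfies has first coordinate $c_i=a_{\pi_2(i)}\circ b_i$, not $a_{\pi_2\inv(i)}\circ b_i$ as displayed in \eqref{multiplication}. Under the paper's stated convention that $\pi_1\circ\pi_2$ is composition of functions, the displayed rule is not even associative (test $\pi_1=e$, $\pi_2=(1\,2)$, $\pi_3=(2\,3)$ with the second and third coordinate vectors trivial: one bracketing yields $a_{\pi_2\inv\pi_3\inv(i)}$, the other $a_{\pi_3\inv\pi_2\inv(i)}$), so the inverse there is a slip in the statement and your bookkeeping is the consistent one. You should therefore state the rule you actually derive, or note the needed correction (replace $a_{\pi_2\inv(i)}$ by $a_{\pi_2(i)}$, or switch the permutation product to the opposite convention), rather than claim your computation literally reproduces \eqref{multiplication}; apart from this convention issue your argument is complete and correct.
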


We point out that the isomorphism $\chi$ is not canonical. It requires making a choice of isomorphism between $\Aut(U_i, T^k|_{U_i})$ and $\Aut(U_1, T^k|_{U_1})$ for all $i=1,...,n$.

\begin{proof}
The minimal components of $(X,T^k)$ form a partition of $X$ into closed sets. Since $(U_i, T^k|_{U_i})$ are conjugate for $i=1,2,...,n$ define $\vphi_{i,i+1}$ for $i=1,2,...,n-1$ to be a conjugacy between $(U_i, T^k|_{U_i})$ and $(U_{i+1}, T^k|_{U_{i+1}})$, and define $\vphi_{n,1}=\vphi_{1,2}\inv\circ\vphi_{2,3}\inv\dots\circ\vphi_{n-2,n-1}\inv\circ\vphi_{n-1,n}\inv$. Hence, we have the following commutative diagram:

\begin{center}
\begin{tikzcd}\label{minimaldia}
U_1 \arrow[r, "{\varphi_{1,2}}"'] \arrow[d, "T^k"] & U_2 \arrow[r, "{\varphi_{2,3}}"'] \arrow[d, "T^k"] & \dots \arrow[r, "{\varphi_{n-1,n}}"'] & U_n \arrow[d, "T^k"] \arrow[lll, "{\varphi_{n,1}}", bend right] \\
U_1 \arrow[r, "{\varphi_{1,2}}"]                   & U_2 \arrow[r, "{\varphi_{2,3}}"]                   & \dots \arrow[r, "{\varphi_{n-1,n}}"]  & U_n. \arrow[lll, "{\varphi_{n,1}}", bend left]                  
\end{tikzcd}

\captionof{figure}{}
\end{center} Additionally, define for $i,j\in\{1,2,...,n\}$ with $i\leq j$ define $\vphi_{i,j}=\vphi_{j-1,j}\circ\vphi_{j-2,j-1}\circ...\circ\vphi_{i+1,i+2}\circ\vphi_{i,i+1}$ and $\vphi_{j,i}=\vphi_{ij}\inv$.

By \autoref{minimalcomponents} an automorphism of $(X, T^k)$ defines a permutation on the set of minimal components of $(X, T^k)$. So we can define a map $\rho\colon  \Aut (X, T^k) \to \Sym(n)$ by sending each automorphism to its corresponding permutation on the set of minimal components. 

Let $\pi\in \Sym(n)$. Define 
$\Phi_\pi$ such that $U_i$ is mapped to $U_{\pi(i)}$ via $\vphi_{i,\pi(i)}$. Since minimal components are closed and disjoint $\Phi_\pi$ is continuous and since it commutes with $T^k$ on each minimal component we can conclude that $\Phi_\pi$ is an automorphism of $(X,T)$. Notice $\rho(\Phi_\pi)=\pi$. Thus $\rho$ is surjective.

We can construct an automorphism $\Psi$ of $(X,T)$ by choosing a particular automorphism $f_i$ of each minimal component $(U_i, T^k|U_{i})$ and defining $\Psi\equiv f_i$ on $U_i$. That is, $\Psi$ does not permute the minimal components and only acts on each one by their specified automorphism. Since minimal components are closed and disjoint $\Psi$ is continuous and since it commutes with $T^k$ on each minimal component we can conclude that $\Psi$ is an automorphism of $(X,T^k)$. Hence, we can define a group monomorphism $\iota\colon \Aut(U_1, T^k|_{U_1})^n\to \Aut(X,T^k)$, since $\Aut(U_i, T^k|_{U_i})$ is isomorphic to $\Aut(U_1, T^k|_{U_1})$ for $i=1,...,n$. Notice $\Psi\in {\Aut(U_1, T^k|_{U_1})}^n $ and $\rho(\iota(\Psi))= e$ where $e$ is the identity in $\Sym(n)$. Therefore, we have the following short exact sequence:\begin{center}
\begin{tikzcd}
1 \arrow[r] & {\Aut(U_1, T^k|_{U_1})}^n \arrow[r, "\iota"] & {\operatorname{Aut}(X,T^k)} \arrow[r, "\rho"] & \Sym(n){} \arrow[r] \arrow[l, dotted, bend right=49] & 1
\end{tikzcd}
\end{center}
Using the fact that the diagram in Figure 2 commutes, we can define a splitting of this sequence as the map from $\Sym(n)$ to $\operatorname{Aut}(X, T^k)$ by sending each permutation $\pi\in\Sym(n)$ to $\Phi_\pi$ as defined above. Hence,  $$\Aut(X,T^k)\cong [\Aut(U_1,T^k|_{U_1})]^n \rtimes \Sym(n).$$

The formula for the multiplication in \ref{multiplication} follows immediately.
\end{proof}

\subsection{Odometers}
We give a brief review of odometers, for a more complete exposition see \cite{DownSur}. 

Let $(p_n)$ be a sequence of natural numbers such that $p_n$ divides $p_{n+1}$. We call any such sequence a \textit{scale}. We define the \textit{odometer with scale $(p_n)$} as the subgroup of $\prod_{n=1}^{\infty} \bigslant{\Z}  {p_n\Z}$ given by
$$\Z_{(p_n)}=\{(x_n)\in \prod_{n=1}^{\infty} \bigslant{\Z}  {p_n\Z} : \,\,x_n\equiv x_{n+1}\,\, \operatorname{mod } \,p_n, \, \text{ for all } n\in\N\}.$$
The odometer $\Z_{(p_n)}$ can also be defined as the inverse limit $\lim_{\xleftarrow[]{}n} \bigslant{\Z}  {p_n\Z}$ of the canonical homomorphisms $\bigslant{\Z}  {p_{n+1}\Z} \to \bigslant{\Z}{p_{n}\Z}$. The natural dynamics on an odometer $\Z_{(p_n)}$ is given by the addition by $\operatorname{\text{\bf{1}}}=(1,1,1,...)$. It is not difficult to see that it is a minimal equicontinuous topological dynamical system called an \textit{odometer} and denote by $(Z_{(p_n)},+\operatorname{\text{\bf{1}}})$. We call both the group $\Z_{(p_n)}$ and the system $(\Z_{(p_m)},+\textbf{1})$ an odometer, which one we are referring to is clear from the context. In particular, the subgroup $\langle\operatorname{\text{\bf{1}}}\rangle$ is dense in $\Z_{(p_n)}$ and is isomorphic to $\Z$. We denote the multiples of $\textbf{1}$ by $\textbf{m}=m\operatorname{\text{\bf{1}}}=(m \mod{p_1},m\mod{p_2},m\mod{p_3},...)$ for all $m\in\N$.

For each prime number $p$, denote by $\nu_p(n)$ the \textit{p-adic valuation} of the integer $n$, i.e.
$$\nu_p(n)=\max\{k\geq 0: p^k \text{ divides } n\}.$$ Given an odometer $\Z_{(p_n)}$ the sequence $(\nu_p(p_n))_{n\geq1}$ is non-decreasing and we can define for each prime the \textit{multiplicity function} at $p$ as
$$\operatorname{\bf{v}}_p(p_n)=\lim_{n\to \infty}\nu_p(p_n).$$

We can endow an odometer $\Z_{(p_n)}$ with the metric 
$$\operatorname{d}(x,y)=2^{-\inf\{i\in \N\,\,:\,\, x_i-y_i\neq 0\}}$$
for any $x=(x_n)$ and $y=(y_n)\in \Z_{(p_n)}$. With this metric $\Z_{(p_n)}$ is a compact topological group. 

The question of when two odometers are isomorphic (as topological groups or simply algebraically) is completely understood by the following theorem. 

\begin{theorem}[see for example \cite{DownSur}]  \label{odometerisomorphism}
Two odometers $\Z_{(p_n)}$ and $\Z_{(s_n)}$ are isomorphic both algebraically and as topological groups if and only if $\operatorname{\bf{v}}_q(p_n)=\operatorname{\bf{v}}_q(s_n)$ for all primes $q$. Moreover, for an odometer $\Z_{(p_n)}$  we have the following group isomorphism 
\begin{equation}\label{odomdecomp}
    \Z_{(p_n)}\cong\left(\prod_{p\in I}\Z_{(p^n)}\right)\times\left( \prod_{p \in F}\Z/{p^{\textbf{v}_p(p_n)}}\Z\right),
\end{equation}
where $I=\{ p \text{ prime}:\operatorname{\bf{v}}_p(p_n)=\infty\}$ and $F=\{ p \text{ prime}:1<\operatorname{\bf{v}}_p(p_n)<\infty\}$. The image of $\operatorname{\text{\bf{1}}}$ under this isomorphism is  $((\operatorname{\text{\bf{1}}}, \operatorname{\text{\bf{1}}},...),(1,1,1,...)).$ 
\end{theorem}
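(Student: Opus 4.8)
The plan is to prove the two halves separately: first the explicit decomposition \eqref{odomdecomp}, and then the classification statement, which will follow from the decomposition together with the uniqueness of the multiplicity data. For the decomposition, I would work with the inverse-limit description $\Z_{(p_n)} = \varprojlim \Z/p_n\Z$. Factor each modulus into prime powers: writing $p_n = \prod_{q} q^{\nu_q(p_n)}$, the Chinese Remainder Theorem gives a ring (in particular group) isomorphism $\Z/p_n\Z \cong \prod_{q} \Z/q^{\nu_q(p_n)}\Z$, and these isomorphisms are compatible with the bonding maps $\Z/p_{n+1}\Z \to \Z/p_n\Z$ because reduction mod $p_n$ respects reduction in each prime-power coordinate. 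Taking the inverse limit and commuting it past the (finite, for each fixed $n$, but in the limit possibly infinite) product of primes dividing some $p_n$, I get $\Z_{(p_n)} \cong \prod_{q} \varprojlim_n \Z/q^{\nu_q(p_n)}\Z$. For a fixed prime $q$ the sequence $(\nu_q(p_n))_n$ is non-decreasing; if it is eventually constant equal to $\bf{v}_q((p_n)) < \infty$ then the inverse system stabilizes and the limit is just $\Z/q^{\bf{v}_q((p_n))}\Z$ (this contributes to the $F$-part, and primes with $\bf{v}_q = 0$ contribute a trivial factor that we discard); if it tends to $\infty$ then, after passing to a cofinal subsequence, the limit is by definition the $q$-adic odometer $\Z_{(q^n)} = \Z_q$ (the $I$-part). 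Tracking the element $\operatorname{\text{\bf{1}}} = (1,1,\dots)$ through CRT and the limit shows its image is $((\operatorname{\text{\bf{1}}},\operatorname{\text{\bf{1}}},\dots),(1,1,\dots))$ as claimed.

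For the classification, the ``if'' direction is immediate from \eqref{odomdecomp}: if $\bf{v}_q((p_n)) = \bf{v}_q((s_n))$ for every prime $q$ then the right-hand sides of the decompositions for $\Z_{(p_n)}$ and $\Z_{(s_n)}$ are literally the same product of groups, and the identification can be checked to be a homeomorphism since on each factor it is induced by CRT isomorphisms of finite groups (hence continuous both ways) and an inverse limit of continuous maps is continuous, with a continuous inverse by the same argument. For the ``only if'' direction I would recover the invariants $\bf{v}_q$ from the group structure alone. The cleanest route: for a prime $q$, the subgroup $qG$ (or iterated subgroups $q^k G$) and the quotients $G/q^k G$ are isomorphism invariants of an abelian group $G$; computing these for $G = \Z_{(p_n)}$ via \eqref{odomdecomp} shows that $|G/qG|$ detects exactly which primes occur with $\bf{v}_q \geq 1$, and more refined counting of the torsion — e.g. the orders of the finite cyclic factors, recoverable as the $q$-torsion subgroup $G[q^\infty]$'s structure, versus the torsion-free rank-type contribution of the $\Z_q$ factors (which are torsion-free and $q$-divisible-free in a suitable sense) — pins down each $\bf{v}_q((p_n))$ completely. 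Thus an algebraic isomorphism forces equality of all multiplicity functions.

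The main obstacle I expect is the ``only if'' direction, specifically cleanly separating, from the abstract group, the torsion-free $q$-adic factors $\Z_q$ (where $\bf{v}_q = \infty$) from the finite cyclic $q$-power factors (where $\bf{v}_q$ is finite but positive), and showing that the exponent in the finite case is an invariant. One has to be a little careful because $\Z_q$ contributes $q$-divisible-by-nothing behavior that could be confused with large finite cyclic factors; the resolution is that $\Z_q/q^k\Z_q \cong \Z/q^k\Z$ for all $k$ so $\Z_q$ is ``$q$-cofinal'' in a way a single finite factor is not, i.e. $\bigcap_k q^k G$ isolates the product of the $\Z_q$'s over $q \in I$ (for $q \notin I$ the finite factors die under enough multiplications by $q$), and then on that subgroup the number of $\Z_q$-factors for each $q$ is recovered as before, while on a complementary torsion piece the finite cyclic orders are visible directly. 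Everything else — CRT, commuting limits with products, and the topological-group upgrade — is routine. Since this theorem is quoted from \cite{DownSur}, I would in fact present only this sketch and refer the reader there for the full details, but the sketch above is the argument I would expand if a self-contained proof were wanted.
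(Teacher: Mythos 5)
The paper offers no proof of this theorem---it is quoted from \cite{DownSur}---and your sketch (CRT at each finite level $\Z/p_n\Z\cong\prod_q\Z/q^{\nu_q(p_n)}\Z$, compatibility with the bonding maps, commuting the inverse limit with the product, and then recovering each $\operatorname{\bf{v}}_q((p_n))$ as an algebraic invariant via the quotients $G/q^kG$ and the torsion subgroup) is exactly the standard argument behind the cited result, so it is essentially correct and consistent with how the paper treats the statement. One minor slip in your ``obstacle'' paragraph: for a fixed prime $q$, $\bigcap_k q^kG$ does not isolate the $\Z_q$-factors but rather kills the whole $q$-primary part (indeed $\bigcap_k q^k\Z_q=0$), yet this is harmless since the growth of $|G/q^kG|$ in $k$ already separates $\operatorname{\bf{v}}_q=\infty$ from finite $\operatorname{\bf{v}}_q$ and recovers the finite exponent.
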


An immediate consequence of the previous theorem is that the torsion subgroup of an odometer can be written explicitly as
$$T(\Z_{(p_n)})=\prod_{p \in F}\Z/{p^{\textbf{v}_p(p_n)}}\Z,$$
where $F=\{ p \text{ prime}:1<\operatorname{\bf{v}}_p(p_n)<\infty\}$.

This theorem leads us to define the following equivalence relation on scales. Two scales $(p_n)$ and $(s_n)$ are equivalent, denoted by $(p_n)\sim(s_n)$, if and only if $ \operatorname{\bf{v}}_p(p_n)=\operatorname{\bf{v}}_p(s_n) \text{ for all primes } p$. It is easy to check that this is an equivalence relation. Two odometers $\Z_{(p_n)}$ and $\Z_{(s_n)}$ are isomorphic if and only if  $(p_n)\sim(s_n)$.

As stated in \cite{DownSur}, an odometer $\Z_{(p_n)}$ is a factor of another odometer $\Z_{(q_n)}$ if and only if for all $k\in \N$, $p_k$ divides $q_\ell$ for some $\ell \in \N$. This allows us to define the partial ordering $(p_n)\preccurlyeq (s_n)$ if and only if all the following hold:
\begin{enumerate}[(1)]
    \item For all primes $p$, $\operatorname{\bf{v}}_p(p_n)=\infty$ if and only if $\operatorname{\bf{v}}_p(s_n)=\infty$.
 \item For all primes $p$ such that $\operatorname{\bf{v}}_p(s_n)<\infty$ we have that $\operatorname{\bf{v}}_p(p_n)\leq\operatorname{\bf{v}}_p(s_n)$.
\end{enumerate}

\begin{remark}

By \autoref{odometerisomorphism}, two scales $(p_n)$ and $(s_n)$ define isomorphic odometers if and only if $(p_n)\sim(s_n)$. That is, an odometer is completely determined by the sequence $(\operatorname{{\bf{v}}_q(p_n))}_{q\text{ a prime}}\in (\N\cup\{\infty\})^\infty$. Additionally, if $(p_n)\preccurlyeq (q_n)$ then the odometer $\Z_{(p_n)}$ is a factor of the odometer $\Z_{q_n}$.
\end{remark}

We say a scale $(p_n)$ is a \textit{prime scale} if $p_{n+1}/p_{n}$ is prime for all $n\in \N$. Notice that for any scale $(p_n)$ there exists a prime scale $(\Tilde{p}_n)$ such that $(p_n)\sim (\Tilde{p}_n)$.

We say an odometer $\Z_{(p_n)}$ or equivalently a scale $(p_n)$ is
\begin{enumerate}[(i)]
    \item \textit{finite} if there exits $N\in \N$ such that $p_m=p_N$ for all $m\geq N$;
    \item \textit{torsion free} if $\operatorname{\bf{v}}_p(p_n)\in \{0,\infty \}$ for all primes $p$.
\end{enumerate}
For a more detailed classification of odometers see \cite{DownSur}.

\begin{remark}
From now on, we assume any scale $(p_n)$ is not finite as otherwise the group $\Z_{(p_n)}$ is finite and the dynamical system $(\Z_{(p_n)},+\textbf{1})$ is periodic.
\end{remark}

Odometers classify all equicontinuous dynamical systems on a totally disconnected infinite space.

\begin{theorem}[see for example \cite{kurka}] \label{odomuniversality}
Let $(X,T)$ be a minimal equicontinuous dynamical system on a totally disconnected infinite space $X$. Then $(X,F)$ is conjugate to an odometer. 
\end{theorem}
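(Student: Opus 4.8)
The plan is to reconstruct the odometer structure directly from the dynamics using the equicontinuity together with the total disconnectedness of $X$. First I would use equicontinuity to produce a nested sequence of clopen partitions of $X$ that is compatible with $T$. Concretely, since $X$ is compact and totally disconnected it has a basis of clopen sets, and equicontinuity lets us choose, for each $\eps_n = 2^{-n}$, a $\delta_n$ witnessing equicontinuity; by compactness we can find a finite clopen partition $\mathcal{Q}_n$ of $X$ each of whose pieces has diameter at most $\delta_n$, and by intersecting successive partitions we may assume $\mathcal{Q}_{n+1}$ refines $\mathcal{Q}_n$ and that $\bigcap_n \mathcal{Q}_n$ separates points. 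The key point is that equicontinuity forces $T$ to permute a suitable such partition: if two points lie in the same piece of a fine enough clopen partition, then all their forward and backward images stay $\eps$-close, hence (the partition being clopen with pieces separated by a positive distance) lie in the same piece. So after passing to a subsequence we obtain clopen partitions $\mathcal{P}_n$, refining each other, separating points, on which $T$ acts as a permutation.

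Next I would analyze the $T$-action on each finite partition $\mathcal{P}_n$. Minimality is not assumed, but I can pass to a minimal subsystem $Y \subseteq X$ (which exists by Zorn's lemma) to see that $T$ acts transitively on the pieces of $\mathcal{P}_n$ that meet $Y$; in fact, since the partitions separate points and $X$ is the closure of... — here one must be a little careful without minimality. The cleaner route: observe that the algebra of clopen sets generated by $\bigcup_n \mathcal{P}_n$ is all of the clopen algebra of $X$ (as it separates points), so $X = \varprojlim (X/\mathcal{P}_n)$ as a topological space, where $X/\mathcal{P}_n$ is the finite discrete quotient. On each finite quotient $T$ induces a bijection $T_n$, and the quotient maps intertwine $T_{n+1}$ with $T_n$. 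Thus $(X,T) = \varprojlim (X/\mathcal{P}_n, T_n)$, an inverse limit of finite systems. A finite system is a permutation of a finite set, i.e. a disjoint union of cyclic rotations $(\Z/k\Z, +1)$; I would refine the partitions further (splitting each $X/\mathcal{P}_n$ into the cycles of $T_n$ and re-partitioning) so that each $X/\mathcal{P}_n$ is a single $T_n$-cycle $\Z/q_n\Z$ — this uses that $X$ is infinite and connected-component-free to guarantee the cycle lengths $q_n$ go to infinity and that $q_n \mid q_{n+1}$, since the bonding map $\Z/q_{n+1}\Z \to \Z/q_n\Z$ must be a surjective $\Z$-equivariant map of cyclic rotations, hence reduction mod $q_n$.

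Finally I would assemble: $(X,T) \cong \varprojlim (\Z/q_n\Z, +1) = (\Z_{(q_n)}, +\mathbf{1})$, which is exactly an odometer with scale $(q_n)$ (and the scale is genuinely infinite, not eventually constant, because $X$ is infinite). The main obstacle I anticipate is the bookkeeping around the absence of a minimality hypothesis: making sure that the forced refinements produce a single cycle at each level with divisibility $q_n \mid q_{n+1}$, rather than a disjoint union of cycles of varying lengths, and justifying that total disconnectedness plus "$X$ infinite" rules out the degenerate finite case. One slick way to handle this is to first restrict to a minimal subsystem to identify the "shape" of the inverse limit and then note that an equicontinuous extension-free situation — or simply that every equicontinuous minimal totally disconnected system we have just shown is an odometer, combined with the fact (provable by the partition argument above) that an equicontinuous system on a totally disconnected space with a dense orbit equals the closure of that orbit — forces $X$ itself to be minimal, hence the full system is the odometer. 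I would cite \cite{kurka} for the standard structure-theory packaging but carry out the partition-refinement argument explicitly since it is short.
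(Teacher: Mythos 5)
Two things to flag, one about the statement itself and one about a step in your construction. First, the paper does not actually prove this theorem; it is quoted from \cite{kurka}, so the benchmark is the standard argument there, which assumes \emph{minimality}. As printed (and as you try to prove it) the statement omits that hypothesis, and without it the claim is false: the disjoint union of the odometers $(\Z_{(2^n)},+\mathbf{1})$ and $(\Z_{(3^n)},+\mathbf{1})$ is equicontinuous, totally disconnected and infinite, but is not conjugate to any odometer, since odometers are minimal. This is exactly where your proposal has its real gap: you sense the problem but then try to manufacture minimality at the end (``forces $X$ itself to be minimal,'' via a dense orbit that appears nowhere in the hypotheses); no such argument can exist. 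The honest resolution is to add minimality, as in Kůrka and as holds where the paper invokes the theorem, namely for $(\Z_{(p_n)},+\mathbf{m})$ with $(m,p_n)=1$, whose minimality is established first. Your ``single $T_n$-cycle at each level'' step genuinely needs it: the union of the pieces in one cycle is a nonempty clopen $T$-invariant set, and only minimality lets you conclude it is all of $X$; mere ``splitting into cycles and re-partitioning'' cannot produce a single cycle otherwise.

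Second, the mechanism you give for obtaining $T$-permuted clopen partitions does not work as stated. If $x,y$ lie in the same piece of a fine partition, equicontinuity gives $d(T^kx,T^ky)\le\eps$ for all $k$, but this does not force $T^kx$ and $T^ky$ into the same piece: the partition is chosen after $\eps$, so the positive gap between its pieces may be far smaller than $\eps$, and in general a fine clopen partition is simply not permuted by $T$. The standard repair: fix a clopen partition $\mathcal{Q}$ of small mesh, let $\gamma>0$ be the minimal distance between distinct pieces, use equicontinuity to choose $\delta<\gamma$ with $d(x,y)\le\delta$ implying $d(T^kx,T^ky)<\gamma$ for all $k\in\Z$, and pass to the common refinement $\bigvee_{k\in\Z}T^{-k}\mathcal{Q}$: any clopen partition of mesh $\le\delta$ refines it, so it is a finite clopen partition, it is $T$-invariant by construction, and its mesh is at most that of $\mathcal{Q}$. (Equivalently, work with the invariant metric $d'(x,y)=\sup_{k}d(T^kx,T^ky)$ and its $\eps$-chain components.) With minimality assumed and this fix made, the rest of your outline --- transitive, hence cyclic, action on each finite quotient, equivariant surjections forcing $q_n\mid q_{n+1}$ with bonding maps reduction mod $q_n$, $q_n\to\infty$ because the partitions separate points and $X$ is infinite, and the inverse limit identification with $(\Z_{(q_n)},+\mathbf{1})$ --- is correct and is essentially the proof in \cite{kurka}.
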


The automorphism groups of odometers are completely classified.

\begin{proposition}[see for example \cite{ddmpToe}]\label{isomautodom}
Let $\Z_{(p_n)}$ be an odometer, then  $\Aut(\Z_{(p_n)},+\operatorname{\text{\bf{1}}})\cong \Z_{(p_n)}$ as groups.
\end{proposition}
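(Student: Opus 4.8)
The plan is to produce an explicit group isomorphism $\Aut(\Z_{(p_n)},+\textbf{1})\to\Z_{(p_n)}$ by showing that every automorphism of the system $(\Z_{(p_n)},+\textbf{1})$ is itself a translation of the group $\Z_{(p_n)}$. The key structural fact to exploit is that $(\Z_{(p_n)},+\textbf{1})$ is minimal: if $\vphi$ is an automorphism and $\vphi(0)=a$, I would argue that $\vphi$ and the translation $x\mapsto x+a$ agree on the dense orbit $\mathcal{O}_{+\textbf{1}}(0)=\langle\textbf{1}\rangle$, hence agree everywhere by continuity. Concretely, since $\vphi$ commutes with $+\textbf{1}$, for every $m\in\Z$ we have $\vphi(\textbf{m})=\vphi((+\textbf{1})^m(0))=(+\textbf{1})^m(\vphi(0))=\textbf{m}+a$; continuity and density of $\langle\textbf{1}\rangle$ in $\Z_{(p_n)}$ then force $\vphi(x)=x+a$ for all $x$.

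Once this is established, the map $\Theta\colon\Aut(\Z_{(p_n)},+\textbf{1})\to\Z_{(p_n)}$ sending $\vphi$ to $\vphi(0)$ is well-defined and surjective, since every translation $R_a\colon x\mapsto x+a$ is a homeomorphism of the compact group $\Z_{(p_n)}$ commuting with $+\textbf{1}$ (the group is abelian), so $R_a\in\Aut(\Z_{(p_n)},+\textbf{1})$ and $\Theta(R_a)=a$. Injectivity is immediate from the agreement argument above: if $\vphi(0)=\psi(0)$ then $\vphi=\psi$. It remains to check $\Theta$ is a homomorphism, which is just the computation $\Theta(\vphi\circ\psi)=(\vphi\circ\psi)(0)=\vphi(\psi(0))=\psi(0)+\vphi(0)=\Theta(\psi)+\Theta(\vphi)$, using that $\vphi$ is the translation by $\Theta(\vphi)$; since the target is abelian the order does not matter and $\Theta$ is a genuine group isomorphism onto $\Z_{(p_n)}$.

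The one point that needs a small amount of care — and which I expect to be the only real obstacle — is verifying that $\langle\textbf{1}\rangle$ is dense in $\Z_{(p_n)}$, equivalently that the system is minimal, but this is already recorded in the preliminaries (the odometer is stated there to be a minimal equicontinuous system, with $\langle\textbf{1}\rangle$ dense and isomorphic to $\Z$), so I may simply invoke it. A secondary subtlety is keeping track of the composition convention in the homomorphism check so that one obtains an isomorphism rather than an anti-isomorphism; since $\Z_{(p_n)}$ is abelian this is harmless, and I would state $\Theta$ (or its inverse $a\mapsto R_a$) as the explicit isomorphism. No further input is needed: the proof is essentially the standard "automorphisms of a minimal rotation are rotations" argument specialized to the odometer.
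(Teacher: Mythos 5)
Your proof is correct, and it is exactly the standard argument behind this fact: the paper itself gives no proof here, only the citation to \cite{ddmpToe}, and the usual proof in that reference amounts to the same "automorphisms of a minimal group rotation are translations" argument you give (commutation with $+\textbf{1}$ pins down $\vphi$ on the dense subgroup $\langle\textbf{1}\rangle$, continuity extends, and $\vphi\mapsto\vphi(0)$ is the isomorphism). All the ingredients you invoke (density of $\langle\textbf{1}\rangle$, continuity of translations on the compact group) are available in the paper's preliminaries, so nothing is missing.
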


This theorem establishes the full isomorphism invariance of the automorphism group for odometers. 

\begin{corollary}
If $\Z_{(p_n)}$ and $\Z_{(s_n)}$ are two odometers, then $\Z_{(p_n)}\cong\Z_{(s_n)}$ if and only if $\Aut(\Z_{(p_n)},+\textbf{1})\cong\Aut(\Z_{(s_n)},+\textbf{1})$.
\end{corollary}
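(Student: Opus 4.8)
The plan is to derive the corollary directly from \autoref{odometerisomorphism} and \autoref{isomautodom}. The statement to prove is: for odometers $\Z_{(p_n)}$ and $\Z_{(s_n)}$, we have $\Z_{(p_n)}\cong\Z_{(s_n)}$ if and only if $\Aut(\Z_{(p_n)},+\textbf{1})\cong\Aut(\Z_{(s_n)},+\textbf{1})$. Both directions are short, so I would just chain the two results together.

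For the forward direction, suppose $\Z_{(p_n)}\cong\Z_{(s_n)}$ as groups. By \autoref{isomautodom} we have $\Aut(\Z_{(p_n)},+\textbf{1})\cong\Z_{(p_n)}$ and $\Aut(\Z_{(s_n)},+\textbf{1})\cong\Z_{(s_n)}$. Composing these three group isomorphisms gives $\Aut(\Z_{(p_n)},+\textbf{1})\cong\Z_{(p_n)}\cong\Z_{(s_n)}\cong\Aut(\Z_{(s_n)},+\textbf{1})$, which is what we want. (One should note this uses only the algebraic isomorphism asserted in \autoref{isomautodom}.)

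For the reverse direction, suppose $\Aut(\Z_{(p_n)},+\textbf{1})\cong\Aut(\Z_{(s_n)},+\textbf{1})$. Again by \autoref{isomautodom}, $\Z_{(p_n)}\cong\Aut(\Z_{(p_n)},+\textbf{1})\cong\Aut(\Z_{(s_n)},+\textbf{1})\cong\Z_{(s_n)}$, so $\Z_{(p_n)}\cong\Z_{(s_n)}$ as groups. If one wants to phrase the conclusion in terms of scales, one would then invoke the classification in \autoref{odometerisomorphism} to say this is equivalent to $(p_n)\sim(s_n)$; but as stated the corollary only claims the group isomorphism, so that extra step is optional.

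There is essentially no obstacle here — the corollary is a formal consequence of the already-stated \autoref{isomautodom}, since that proposition identifies the automorphism group of an odometer with the odometer itself as an abstract group, and so isomorphism of the automorphism groups is the same as isomorphism of the underlying groups. The only thing to be mildly careful about is to use the \emph{algebraic} isomorphism $\cong$ throughout (consistent with the statement), rather than topological isomorphism, since \autoref{isomautodom} is stated at the level of abstract groups.
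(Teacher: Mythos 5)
Your proof is correct and is exactly the argument the paper intends: the corollary is stated without proof as an immediate consequence of \autoref{isomautodom}, and chaining $\Aut(\Z_{(p_n)},+\textbf{1})\cong\Z_{(p_n)}$ and $\Aut(\Z_{(s_n)},+\textbf{1})\cong\Z_{(s_n)}$ in both directions is precisely that consequence.
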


\subsection{Symbolic systems}
Let $\mathcal{A}$ be a finite set. We define $\mathcal{A}^\Z$ to be the set of bi-infinite sequences $(x_i)_{i\in\Z}$ with $x_i\in \mathcal{A}$ for all $i\in \Z$.  When endowed with the metric 
$$\operatorname{d}((x_i),(y_i))=2^{-\inf\{|i|:x_i\neq y_i\}},$$
$\mathcal{A}^\Z$ is a compact metric space. We define the \textit{left shift} $\sigma\colon\mathcal{A}^\Z\to\mathcal{A}^\Z$ by $(\sigma x)_i = x_{i+1}$ for all $i\in \Z$. If $X \subseteq \mathcal{A}^\Z$ is closed and $\sigma$-invariant, then the dynamical system $(X,\sigma|_X)$ is called a \textit{subshift}. We omit the notation $\sigma|_X$ and just denote a subshift by $(X,\sigma)$. 

For $w = (w_1,... , w_{n})\in \mathcal{A}^n$, we define \textit{the cylinder set} as $$[w]=\{x\in \mathcal{A}^\Z:x_i =w_i \text{ for all } 0\leq i\leq n\}.$$
The collection of cylinder sets $\{\sigma^i([w]) : w \in \mathcal{A}^\ast, i \in\Z\}$ where $ \mathcal{A}^\ast=\bigcup\limits_{j=1}^{\infty}\mathcal{A}^j$ is a basis for the topology of $\mathcal{A}^\Z$.

The \textit{language} of a subshift $(X, \sigma)$ is $$\mathcal{L}(X):=\{w\in\mathcal{A}^\ast: [w
]\cap X\neq \emptyset\}$$
and any $w \in \mathcal{L}(X)$ is called a \textit{word} in the language. For all $n\in\N$, define $\mathcal{L}_n(X)$ to be set of words of length $n$ in $\mathcal{L}(X)$.  The \textit{complexity of a subshift} is $P_X\colon \N\to\N$ defined as $P_X(n)=\# \mathcal{L}_n(X)$.

\subsection{Toeplitz subshifts}\label{toepsec}
A sequence $u=\{u_t\}_{t\in \Z}$ is a \textit{Toeplitz sequence} if for all 
$ n\in \Z \text{ there exists }  m\in \N \text{ such that for all } k\in \Z\text{ we have } u_{n}=u_{n+km}.$ For any $p\in \N$, define $$\operatorname{per}_{p}(u)=\{k\in \N\,|\,  u_k=u_{k+pm} \text{ for all }m\in\Z\}.$$ Then $u$ is a Toeplitz sequence if there exists a sequence of integers $(p_n)$ such that $p_n$ divides $  p_{n+1}$ for all $n\in \N$ and 
$$\bigcup_{n\in \N}\operatorname{per}_{p_n}(u)=\Z.$$
We call the sequence $(p_n)$ a \textit{scale} of $u$. Similarly to odometers, we say a scale $(p_n)$ is a \textit{prime scale} if $p_{n+1}/p_{n}$ is prime for all $n\in \N$.

We say that $p_n$ is an \textit{essential period} of $u$ if for any $1 \leq p < p_n$ the sets $\operatorname{per}_p(u)$ and $\operatorname{per}_{p_n}(u)$ do not coincide. If the sequence ${p_n}$ is formed by essential periods we call it a \textit{period structure} of $u$.

If $u$ is a Toeplitz sequence we define the \textit{Toeplitz subshift given by u} to be $(X_u,\sigma_u)$ where $X_u=\overline{\mathcal{O}_{\sigma}(u)}$ and $\sigma_u=\sigma|_{X_u}$. We omit the sub-index to simplify the notation and denote by $(X, \sigma)$ the respective Toeplitz subshift. Toeplitz subshifts were defined by Jacobs and Keane who also showed that every Toeplitz shift is minimal $\cite{JaK}$.

Let $(X,\sigma)$ be a Toeplitz subshift given by the Toeplitz sequence $u$. From now on, we assume $u$ is not periodic as otherwise the system $(X,\sigma)$ is periodic. An element $x\in X$ is called a \textit{Toeplitz orbital}. It is important to note that a Toeplitz orbital may not be a Toeplitz sequence as some of its coordinates may not be periodic. Since $u$ is not a periodic sequence, points in $X$ that are not Toeplitz sequences necessarily exist (compare to Cor. 4.2 in \cite{baake}). If $x$ is a Toeplitz sequence in $X$ we call it a \textit{regular point}. We denote by $R$ the set of all regular points in $X$. The singleton fibers of the map $\pi\colon X\to \Z_{(p_n)}$ from $X$ to its maximal equicontinuous factor $(\Z_{(p_n)},+1)$ correspond to the regular points in $X$ and form a dense $G_\delta$ subset of $X$ (see for example \cite{DownSur}). It is clear that any period that occurs in $x$ is also a period that occurs in $u$. We define the \textit{periodic part of x} as
 $$\operatorname{P(x)}=\bigcup_{n\in \N}\operatorname{per}_{p_n}(x),$$
and the \textit{aperiodic part of $x$} as
$$\operatorname{A}(x)=\Z\backslash\operatorname{P}(x).$$
We call the \textit{$p$-skeleton} of $x=(x_i)\in X$ the part of $x$ which is periodic with period $p$. To make this precise, we define the $p$-skeleton to be the sequence obtained from $x$ by replacing $x_i$ by a new symbol "?" for all $i\nin \operatorname{per}_{p}(x)$.

Regarding the aperiodic part, we have the following useful properties. 
\begin{lemma}[see for example \cite{DownSur}]\label{aper} Let $(X,\sigma)$ be a Toeplitz subshift and $x\in X$
\begin{enumerate}[(a)]
    \item For any $n\in \operatorname{A}(x)$ there is no $l>0$ such that $x_{n+kl}=x_{n}$ for all $k\in \Z$.
\item Every finite pattern occurring along the aperiodic part of x also occurs along some periodic part.
\end{enumerate}

\end{lemma}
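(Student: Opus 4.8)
The statement has two parts, and both follow from the basic structure of Toeplitz subshifts together with the fact that $X = \overline{\mathcal{O}_\sigma(u)}$ is the orbit closure of a single Toeplitz sequence $u$. The plan is to reduce everything to statements about $u$ and then transfer them to an arbitrary $x \in X$ using the topology.

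For part (a), I would argue by contradiction. Fix $n \in \operatorname{A}(x)$ and suppose there were some $l > 0$ with $x_{n+kl} = x_n$ for all $k \in \Z$; after composing with a shift we may as well take $n = 0$, so $x_{kl} = x_0$ for all $k$. The point is that this says $0 \in \operatorname{per}_l(x)$ in a weak sense — but $\operatorname{per}_l(x)$ as defined in the paper requires $x_j = x_{j+lm}$ for the \emph{specific} coordinate $j$, with $j = 0$ here, which is exactly what we assumed. Hence $0 \in \operatorname{per}_l(x)$. Now I need to know that $l$ is "essentially" one of the periods $p_n$ of the period structure, or at least that $\operatorname{per}_l(x) \subseteq \operatorname{per}_{p_n}(x)$ for some $n$ in the scale: since $(p_n)$ is a scale of $u$ and the periods occurring in any $x \in X$ are among the $p_n$ (as noted in the excerpt, "any period that occurs in $x$ is equal to some $p_n$"), one shows $l$ must share this property — more precisely, if position $0$ of $x$ has period $l$ then it also has period $\operatorname{lcm}(l, p_k)$ for each $k$, and by the cofinality of the $p_n$ in the divisibility order one can absorb $l$ into some $p_N$. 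Then $0 \in \operatorname{per}_{p_N}(x) \subseteq \operatorname{P}(x)$, contradicting $n = 0 \in \operatorname{A}(x)$. The main obstacle here is the bookkeeping needed to pass from "coordinate $0$ is $l$-periodic" to "coordinate $0$ lies in some $\operatorname{per}_{p_N}(x)$" — this requires using that $x$ lies in the orbit closure of $u$, so that any finite window of $x$ agrees with a shifted window of $u$, and then invoking that the periods of $u$ are governed by the fixed scale $(p_n)$.

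For part (b), let $w$ be a finite pattern occurring in $x$ along the aperiodic part, say $w = x_{[a, b]}$ with some index in $[a,b]$ lying in $\operatorname{A}(x)$. Since $x \in \overline{\mathcal{O}_\sigma(u)}$, for every $\eps > 0$ there is $t \in \Z$ with $d(\sigma^t u, x) < \eps$; choosing $\eps$ small enough forces $\sigma^t u$ to agree with $x$ on the window $[a, b]$, so $w$ occurs in $u$ at position $t + a$. Now $\bigcup_n \operatorname{per}_{p_n}(u) = \Z$, so in particular there is some coordinate $c \in [t+a, t+b]$ with $c \in \operatorname{per}_{p_N}(u)$ for some $N$ — indeed every coordinate of $u$ is periodic since $u$ is a Toeplitz sequence — so $w$ occurs in $u$ along a periodic part. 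Transferring back: a periodic occurrence of $w$ in $u$ reappears, by density of the orbit of $u$ in $X$ and the fact that the period $p_N$ witnessing it propagates to every point of the orbit closure, as a periodic occurrence in $x$ as well (or, more directly, one can phrase (b) entirely inside $u$ and then note every $x \in X$ inherits its finite patterns and their periodicity structure from $u$). The slightly delicate point is making precise that "periodicity of a coordinate" is preserved under passing to the orbit closure, which is exactly the content of the standard fact that $\operatorname{per}_{p}(u)$, viewed as forcing agreement on an arithmetic progression, is a closed condition.

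Throughout, I would lean on the remark in the excerpt that the periods occurring in any $x \in X$ coincide with elements $p_n$ of the period structure, and on the characterization of $X$ as an orbit closure; no heavier machinery than these and the metric on $\mathcal{A}^\Z$ should be needed.
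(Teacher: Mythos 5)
Part (a) of your proposal has a genuine gap, and it sits exactly where the content of the lemma is. Your key step is ``one can absorb $l$ into some $p_N$ by the cofinality of the $p_n$ in the divisibility order,'' but the period structure is \emph{not} cofinal in $(\N,\mid)$: for a period structure $(2^n)$ and $l=3$, $\operatorname{lcm}(3,2^k)=3\cdot 2^k$ divides no $p_N$, so $l$-periodicity of the coordinate $n$ never formally upgrades to $p_N$-periodicity. When $l$ does divide some $p_N$ the claim is immediate (any multiple of a period of a single coordinate is again a period of it), so your argument only covers the easy case; the substance of (a) is precisely the case where $l$ has a prime power not absorbed by the scale. Falling back on the paper's sentence that ``any period that occurs in $x$ is equal to some $p_n$'' is circular: that sentence is essentially assertion (a) itself, and the paper can state it only because it is citing this lemma to \cite{DownSur} (the paper gives no proof of its own, so there is no in-paper argument to match). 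An honest proof must use the skeleton structure of Lemma \ref{skeletonlemma}: for each $t$ the point $x$ lies in some $\overline{A^t_{r_t}}$, so every coordinate of $x$ aligned mod $p_t$ with $\operatorname{per}_{p_t}(u)$ is $p_t$-periodic in $x$ with value dictated by $u$, and an aperiodic coordinate $n$ is one missed by all these skeletons; one then shows the skeleton forces the values $x_{n+kl}$ along any arithmetic progression through $n$ to be non-constant (this is where essentiality of the periods enters). Concretely, in an Example \ref{ex1}-type point with hole at $0$ one has $x_m$ determined by the parity of $\nu_2(m)$ for $m\neq 0$, and comparing $k$ with $2k$ already rules out any constant progression; the general argument is of this nature, not divisibility bookkeeping.

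Part (b) is essentially right in outline but vague at exactly the step you flag, and the parenthetical that $x$ ``inherits its finite patterns and their periodicity structure from $u$'' is false as stated---aperiodic coordinates of $x$ arise precisely as limits at which the skeleton alignment fails, so periodicity does not blindly propagate to the orbit closure. What does work: the occurrence of $w$ in $u$ lies inside $\operatorname{per}_{p_N}(u)$ for a single $N$ (the sets $\operatorname{per}_{p_t}(u)$ are nested), hence $w$ recurs in $u$ along every translate of that window by multiples of $p_N$; writing $x=\lim_i\sigma^{m_i}u$ and passing to a subsequence with $m_i$ constant mod $p_N$, some such translate lands inside a window on which $x$ agrees with $\sigma^{m_i}u$, and those coordinates of $x$ are $p_N$-periodic in $x$ because they are aligned mod $p_N$ with $p_N$-periodic coordinates of $u$. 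With that transfer spelled out, (b) is complete; minimality alone gives only that $w$ occurs in $x$, not that it occurs along $\operatorname{P}(x)$.
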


The following key lemma about Toeplitz subshifts was proved by Williams.
\begin{lemma}[Williams \cite{Williams}]\label{skeletonlemma}
Let $(X,\sigma)$ be a Toeplitz subshift given by the Toeplitz word $u$ with period structure $(p_n)$. For each $i\in \N$, $n\in \Z/p_i\Z$ define $A_n^i=\{\sigma^m(u):m\equiv n \mod p_i\}$. Then \begin{enumerate}[(i)]
    \item $\{\overline{A_n^i}:n\in\Z/p_i\Z\}$ is a partition of $X=\overline{\mathcal{O}}_\sigma(u)$ into relatively open (and closed) sets,
    \item $\overline{A_m^j}\subseteq \overline{A_n^i}$ for $i<j$ and $m\equiv n \mod p_i$,
    \item $\sigma(\overline{A_n^i})=\overline{A_{n+1}^i}$.
\end{enumerate}
\end{lemma}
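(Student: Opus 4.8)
The plan is to deduce all three items from the maximal equicontinuous factor of $(X,\sigma)$, using essentially one genuinely symbolic fact about Toeplitz sequences and otherwise only formal arguments. First I would set up a compatible family of finite factors. Since $(X,\sigma)$ is a Toeplitz subshift with period structure $(p_n)$, its maximal equicontinuous factor is the odometer $(\Z_{(p_n)},+\textbf{1})$; fix a factor map $\pi\colon (X,\sigma)\to(\Z_{(p_n)},+\textbf{1})$. Post-composing $\pi$ with the rotation by $-\pi(u)$ (a topological automorphism of the odometer, so the composite is still a factor map), we may assume $\pi(u)=0$, whence $\pi(\sigma^m u)=\textbf{m}=(m\bmod p_1,m\bmod p_2,\dots)$ for all $m\in\Z$. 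For each $i\in\N$ let $q_i\colon\Z_{(p_n)}\to\Z/p_i\Z$ be the projection onto the $i$-th coordinate; it is continuous and $\Z/p_i\Z$ is discrete. Set $\pi_i=q_i\circ\pi\colon X\to\Z/p_i\Z$, a continuous surjection satisfying $\pi_i(\sigma^m u)=m\bmod p_i$ and $\pi_i(\sigma x)=\pi_i(x)+1$.

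Then I would identify the closures: by definition $A_n^i=\{\sigma^m u:m\equiv n\bmod p_i\}=\mathcal{O}_\sigma(u)\cap\pi_i^{-1}(n)$. The sets $\pi_i^{-1}(n)$, $n\in\Z/p_i\Z$, are clopen and partition $X$ because $\pi_i$ is continuous into a finite discrete space; since $\mathcal{O}_\sigma(u)$ is dense in $X$ and $\pi_i^{-1}(n)$ is open, $A_n^i$ is dense in the closed set $\pi_i^{-1}(n)$, so $\overline{A_n^i}=\pi_i^{-1}(n)$. This is exactly (i). For (ii), when $i<j$ the canonical reduction $r\colon\Z/p_j\Z\to\Z/p_i\Z$ satisfies $q_i=r\circ q_j$, hence $\pi_i=r\circ\pi_j$, so $\pi_j^{-1}(m)\subseteq\pi_i^{-1}(n)$ whenever $m\equiv n\bmod p_i$; via the identification of closures this is (ii). For (iii), $\pi_i(\sigma x)=\pi_i(x)+1$ forces $\sigma(\pi_i^{-1}(n))=\pi_i^{-1}(n+1)$, i.e. $\sigma(\overline{A_n^i})=\overline{A_{n+1}^i}$; alternatively one observes directly that $\sigma(A_n^i)=A_{n+1}^i$ and that $\sigma$ is a homeomorphism.

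The one non-formal input is the existence of the factor map $\pi$ with the stated behaviour along the orbit of $u$, equivalently the continuity of each $\pi_i$: that $m\bmod p_i$ is a locally constant function of $\sigma^m u$. I expect this to be the main point, and it is where the Toeplitz hypothesis is genuinely used — one invokes $\bigcup_n\operatorname{per}_{p_n}(u)=\Z$ (and that $p_i$ is an essential period) to see that a long enough central window of $\sigma^m u$ contains a coordinate lying in $\operatorname{per}_{p_i}(u)$, whose value and position determine $m\bmod p_i$. This can either be verified directly or imported from the structure theory of Toeplitz systems in \cite{DownSur}. A self-contained symbolic variant would replace $\pi_i$ by the ``$p_i$-skeleton'' of $u$ — the $p_i$-periodic partial word recording $u$ on $\operatorname{per}_{p_i}(u)$, whose minimal period is exactly $p_i$ precisely because $p_i$ is essential — and then show that no point of $X$ can extend two distinct shifts of this skeleton, using minimality of $X$ to exclude the degenerate over-periodic cases; the essential-period hypothesis is precisely what is needed to handle the case where the skeleton's support is translation invariant.
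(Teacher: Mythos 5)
The paper does not actually prove this lemma: it is quoted from Williams \cite{Williams}, so the relevant comparison is with the standard combinatorial proof there (and in \cite{DownSur}). Your formal reduction is correct as far as it goes: once you have a factor map $\pi\colon(X,\sigma)\to(\Z_{(p_n)},+\mathbf{1})$ normalized so that $\pi(u)=0$, each $\pi_i=q_i\circ\pi$ is continuous into a finite discrete group, $\overline{A_n^i}=\pi_i^{-1}(n)$ because $\pi_i^{-1}(n)$ is clopen, nonempty, and meets the dense orbit exactly in $A_n^i$, and then (i), (ii), (iii) follow exactly as you say.

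The difficulty is where the content sits. The identification of the maximal equicontinuous factor of a Toeplitz subshift with period structure $(p_n)$ as precisely $(\Z_{(p_n)},+\mathbf{1})$ is, in \cite{DownSur} and \cite{ddmpToe}, obtained by first establishing this very partition: the factor map is defined by sending $x$ to the sequence of residues $n_i$ with $x\in\overline{A_{n_i}^i}$. So importing that identification makes your argument circular as a proof of Williams' lemma, and what you call the ``one non-formal input'' (continuity of every $\pi_i$, equivalently disjointness of the closures in (i)) is not an auxiliary fact but the entire content of the lemma. Your first attempt to discharge it directly does not work as stated: a long central window of $\sigma^m u$ certainly contains coordinates of $u$ lying in $\operatorname{per}_{p_i}(u)$, but the window does not come labelled, so you cannot tell which of its coordinates are the periodic ones, and the symbol value at a periodic position does not single out a residue class (the same letter generally occupies several residue classes mod $p_i$); hence ``whose value and position determine $m\bmod p_i$'' is unjustified. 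The correct argument is the one your second variant gestures at and which is essentially Williams' proof: suppose $x\in\overline{A_n^i}\cap\overline{A_{n'}^i}$, pass to limits along the two approximating subsequences to see that $x$ agrees with two differently shifted copies of the $p_i$-skeleton of $u$, and use the essentiality of $p_i$ (together with facts about the aperiodic part as in \autoref{aper}) to force $n\equiv n'\bmod p_i$. That limit/skeleton argument is where all the work lies, and in your write-up it remains a sketch; as it stands your proposal is either circular (if the maximal equicontinuous factor statement is taken as a black box) or has a genuine gap at exactly the crucial step.
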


Toeplitz subshifts have been fully characterized up to topological conjugacy by the following theorem.
\begin{theorem}[see for example \cite{DownSur}]\label{caracToep}
    A dynamical system $(X,\sigma)$ is conjugate to a Toeplitz subshift if and only if it satisfies the following three properties
    \begin{enumerate}[(i)]
    \item $(X,T)$ is minimal,
    \item $(X,T)$ is an almost one-to-one extension of an odometer,
    \item $(X,T)$ is symbolic.
\end{enumerate}
\end{theorem}

\begin{remark} [see for example \cite{Williams}]\label{11extension}
    The map that gives rise to property (ii) of the previous lemma is constructed as follows. Let $(X,\sigma)$ be a Toeplitz subshift with period structure $(p_n)$. For $g=(x_i)\in \Z_{(p_n)}$ we set 
    $$A_g=\bigcap_{i=0}^\infty \overline{A_{x_i}^i}.$$

    We define the factor map $\pi\colon (X,\sigma)\to (\Z_{(p_n)},+\textbf{1})$ by $\pi\inv(g)=A_g$. Then $\pi(y)=\pi(y')$ for $y,y'\in X$ if and only if $y$ and $y'$ have the same $p_i$-skeleton for all $i\in\N$. In particular, $\pi$ is one-to-one on the set of Toeplitz sequences in $X$.
\end{remark}

As a consequence of property (ii) of the previous theorem, if $(X, \sigma)$ is a the Toeplitz subshift given by the Toeplitz sequence $u$ with period structure $(p_n)$ then $(\Z_{(p_n)}, +\operatorname{\text{\bf{1}}})$ is its maximal equicontinuous factor (see for example \cite{Williams}). Another consequence of this is the following result.

\begin{lemma}[see for example \cite{ddmp}]
The automorphism group of a Toeplitz subshift is isomorphic to a subgroup of its corresponding odometer maximal equicontinuous factor.
\end{lemma}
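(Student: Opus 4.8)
The statement is that for a Toeplitz subshift $(X,\sigma)$ with maximal equicontinuous factor $\pi\colon X\to \Z_{(p_n)}$, the group $\Aut(X,\sigma)$ is isomorphic to a subgroup of $\Z_{(p_n)}$. The plan is to show that every $\vphi\in\Aut(X,\sigma)$ descends to an automorphism of the maximal equicontinuous factor, and that this induced map is injective on $\Aut(X,\sigma)$. Combined with \autoref{isomautodom}, which identifies $\Aut(\Z_{(p_n)},+\textbf{1})$ with $\Z_{(p_n)}$, this gives the claim.

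First I would use the functoriality of the maximal equicontinuous factor: since $(\Z_{(p_n)},+\textbf{1})$ is the \emph{maximal} equicontinuous factor of $(X,\sigma)$ and $\vphi$ commutes with $\sigma$, the composition $\pi\circ\vphi\colon X\to\Z_{(p_n)}$ is again a factor map onto an equicontinuous system, so by the universal property it factors through $\pi$; that is, there is a unique continuous map $\bar\vphi\colon \Z_{(p_n)}\to\Z_{(p_n)}$ with $\bar\vphi\circ\pi=\pi\circ\vphi$, and $\bar\vphi$ commutes with $+\textbf{1}$. Applying the same reasoning to $\vphi\inv$ gives $\overline{\vphi\inv}$, and by uniqueness $\overline{\vphi\inv}=\bar\vphi\inv$, so $\bar\vphi\in\Aut(\Z_{(p_n)},+\textbf{1})$. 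The assignment $\vphi\mapsto\bar\vphi$ is a group homomorphism, again by the uniqueness clause in the universal property: $\overline{\vphi\circ\phi}\circ\pi=\pi\circ\vphi\circ\phi=\bar\vphi\circ\pi\circ\phi=\bar\vphi\circ\bar\phi\circ\pi$.

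The crux is injectivity of $\vphi\mapsto\bar\vphi$. Here I would invoke that $(X,\sigma)$ is an almost one-to-one extension of $(\Z_{(p_n)},+\textbf{1})$: the set $R$ of regular points is exactly the set of points $x$ whose fiber $\pi\inv(\pi(x))$ is a singleton, and $R$ is a dense $G_\delta$ in $X$ (stated in the excerpt). Suppose $\bar\vphi=\operatorname{id}$. Then for every $x\in X$, $\pi(\vphi(x))=\pi(x)$, so $\vphi$ maps each fiber of $\pi$ into itself. In particular for every regular point $x$ the fiber is a single point, forcing $\vphi(x)=x$. Thus $\vphi$ is the identity on the dense set $R$, and since $\vphi$ is continuous and $X$ is (Hausdorff, hence) a space in which equalizers of continuous maps are closed, $\vphi=\operatorname{id}$ on all of $X$. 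Hence the kernel is trivial and $\Aut(X,\sigma)$ embeds as a subgroup of $\Aut(\Z_{(p_n)},+\textbf{1})\cong\Z_{(p_n)}$.

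The main obstacle is making the application of the universal property of the maximal equicontinuous factor fully rigorous — specifically, confirming that $\pi\circ\vphi$ really does factor through $\pi$ with the commuting relation $\bar\vphi\circ(+\textbf{1})=(+\textbf{1})\circ\bar\vphi$, and that $\bar\vphi$ is a homeomorphism rather than merely a continuous surjection. The surjectivity of $\bar\vphi$ follows from surjectivity of $\pi\circ\vphi$ (as $\vphi$ and $\pi$ are both onto), and the two-sided inverse argument handles bijectivity and bicontinuity, so this is a matter of careful bookkeeping rather than a genuine difficulty; everything else is routine once the almost one-to-one property is in hand.
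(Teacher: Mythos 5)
Your proposal is correct: the paper itself gives no proof of this lemma (it is quoted with a citation to \cite{ddmp}), and your argument --- descend each automorphism to the maximal equicontinuous factor via its universal property, then get injectivity of $\vphi\mapsto\bar\vphi$ from the dense set of singleton fibers of the almost one-to-one extension, and finish with \autoref{isomautodom} --- is exactly the standard argument used in the cited source. No gaps; the bookkeeping you flag (uniqueness forcing $\overline{\vphi\inv}=\bar\vphi\inv$ and the homomorphism property) works because $\pi$ is surjective.
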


\begin{remark}
As a consequence of the previous lemma, the automorphism group of a Toeplitz subshift is abelian.
\end{remark}

We use some similar terminology for Toeplitz subshifts as for odometers. We say a Toeplitz subshift given by the Toeplitz word $u$ with period structure $(p_n)$ is \textit{torsion free} if its corresponding odometer maximal equicontinuous factor is torsion free.

\section{The stabilized automorphism group of an odometer}\label{sectionodometers}
This section is dedicated to characterizing the stabilized automorphism group of odometers. In order to study the stabilized automorphism group of odometers, we first analyze $\Aut(\Z_{(p_n)}, +\text{\textbf{m}})$ for all $m\in \N$. We start by proving the following proposition.

\begin{proposition}\label{generalodom}
Let $\Z_{(p_n)}$ be an odometer with scale $(p_n)$ and set $m\in \N$. Let $d\geq 0$ be such that for some $k_0\in \N$ we have that $(p_k,m)=d$ for $k\geq k_0$ and $k_0$ is the smallest integer with this property. Then $(\Z_{(p_n)},+\textbf{m})$ has $d$ minimal components each of them conjugate to the odometer with scale $(\frac{p_n}{d})_{n\geq k_0}$. Furthermore, $\Aut(\Z_{(p_n)},+\textbf{m})\cong \Z_{(p_{n}/d)_{n\geq k_0}}^d \rtimes \Sym(d)$ and is isomorphic to a subgroup of $\Aut(\Z_{(p_n)},+\textbf{p}_{k_0})$.
\end{proposition}

\begin{proof}
We will first assume $d=1$. We know $(\Z_{(p_n)},+\textbf{m})$ is minimal by Lemma 2.1 in \cite{ddmpToe}. Since $(\Z_{(p_n)}, +\text{\bf{m}})$ is a minimal equicontinuous dynamical system on a totally disconnected space by \autoref{isomautodom} and \autoref{odomuniversality} $(\Z_{(p_n)}, +\text{\bf{m}})$ is conjugate to the odometer $(\Z_{(p_n)}, +\text{\bf{1}})$ and $\Aut(\Z_{(p_n)}, +\text{\bf{m}})\cong \Z_{(p_n)}$. However, since many groups have subgroups isomorphic to themselves, including some odometers, this is not enough to conclude $\Aut(\Z_{(p_n)},+\textbf{m})=(\Z_{(p_n)},+\textbf{1})$. We show this next.

It is obvious that $\Aut(\Z_{(p_n)}, +\text{\bf{1}})\subseteq \Aut(\Z_{(p_n)}, +\text{\bf{m}})$, we are left with proving the other inclusion. Take $\vphi\in \Aut(\Z_{(p_n)}, +\text{\bf{m}})$ and $\eps>0$.  Since $\vphi$ is continuous, by our definition of the metric in $\Z_{(p_n)}$ there exists $N\in\N$ such that for all $(x_i),(y_i)\in\Z_{(p_n)}$ if $x_j=y_j$ for all $j\leq N$ then $\operatorname{d}(\vphi(x_i),\vphi(y_i))<\eps/2$. Pick $M\in \N$ such that for all $(x_i),(y_i)\in\Z_{(p_n)}$ if $x_j=y_j$ for all $j\leq M$ then $\operatorname{d}((x_i),(y_i))<\eps/2$. Define $K=\max\{N,M\}$. By Bézout's identity, since $(p_K,m)=1$ there exist $a,b\in\N$ such that 
$$am=bp_K+1.$$ Because $+\textbf{m}$ commutes with $\phi$ and by our choice of $K$ we have that for all $x=(x_i)\in\Z_{(p_n)}$ \begin{align*}
    \operatorname{d}(\vphi(x+\textbf{1}),\vphi (x)+\textbf{1})&\leq\operatorname{d}(\vphi(x+\textbf{1}),\vphi(x+a\textbf{m}))+\operatorname{d}(\vphi(x+a\textbf{m}),\vphi (x)+\textbf{1})
    \\&=\operatorname{d}(\vphi(x+\textbf{1}),\vphi(x+a\textbf{m}))+\operatorname{d}(\vphi(x)+a\textbf{m},\vphi(x)+\textbf{1})
    \\&\leq \eps/2+\eps/2 =\eps.
\end{align*}
Where the last inequality follows from the fact that $x+a\textbf{m}$ and $x+a\textbf{1}$ agree on the first $K$ coordinates. We conclude $\vphi(x+\textbf{1})=\vphi x+\textbf{1}$, hence $\vphi\in \Aut(\Z_{(p_n)},+\textbf{1})$. This proves $\Aut(\Z_{(p_n)},+\textbf{m})=\Aut(\Z_{(p_n)},+\textbf{1})\cong \Z{(p_n)}$.

Assume now that $d>1$. By \autoref{odometerisomorphism}, $(\Z_{(p_n)}, +\textbf{1})$ is conjugate to an odometer $(\Z_{(p'_n)}, +\textbf{1})$ with period structure $(p'_n)$ such that $p'_1=d$ and $\operatorname{\textbf{v}}_q(p_n)=\operatorname{\textbf{v}}_q(p'_n)$. Without loss of generality, we assume $p_1=d$. Since the first coordinate of elements in $\Z_{(p_n)}$ belongs to $\Z/d\Z$ the addition $+\textbf{m}$ fixes the first coordinate. For $j=0,1,...,d-1$, we define the subsets of $\Z_{(p_n)}$ $$U_j=\{(x_i)\in \Z_{(p_n)}\,\,:\,\,x_1=j\}.$$ Notice that these are clopen sets invariant under the action $+\textbf{m}$. Define the map $\vphi\colon {U_j}\to \Z_{(p_{n+1}/d)_{n\in \N}}$ by $$\vphi((x_i)_{i\in \N})=(\frac{x_{i+1}-j}{d})_{i\in \N}.$$
Then $\vphi$ is a homeomorphism and the following diagram commutes
\begin{center}
\begin{tikzcd}
 U_j \arrow[d, "\vphi"'] \arrow[r, "+\textbf{m}"] & U_j \arrow[d, "\vphi"] \\
\mathbb{Z}_{(p_{n+1}/d)} \arrow[r, "+\textbf{m/d}"']                         & \mathbb{Z}_{(p_{n+1}/d)}.                     
\end{tikzcd}
\end{center} This implies the action of $+\textbf{m}$ restricted to $U_j$ is conjugate to $(\Z_{(p_{n+1}/d)}, +\textbf{s})$, where $s=m/d$. By the case $d=1$, $U_j$ is a minimal component. Hence, the number of minimal components of  $(\Z_{(p_n)}, +\text{\bf{m}})$ is $d$. Moreover, we have that each minimal component is conjugate to $(\Z_{(p_{n+1}/d)}, +\text{\bf{1}})$ and we have the identity $\Aut(U_j,+\textbf{m})=\Aut(U_j,+\textbf{s}).$

 By the case $d=1$, we have that the automorphism group of each minimal component under the action $+\text{\bf{s}}$ is isomorphic to $\Z_{(p_{n+1})/d}$.
 Moreover, as consequence of \autoref{semidirectproduct}, we have 
$$\Aut(\Z_{(p_n)}, +\text{\bf{m}})=\Aut(\Z_{(p_n)}, +\text{\bf{d}})\cong \Z_{(p_{n+1}/d)}^d \rtimes \Sym(d).$$

Given $(q_n)$ any equivalent period structure, we have shown the inclusion
$$\Aut(\Z_{(q_n)}, +\text{\bf{m}})=\Aut(\Z_{(q_n)}, +\text{\bf{d}})\subseteq \Aut(\Z_{(q_n)}, +\text{\bf{{q'}}}_{k_0}),$$
where $k_0\in \N$ is such that $(q_k,m)=d$ for $k\geq k_0$.
\end{proof}

\begin{remark}\label{BVtranslation}
    One can translate the previous proof to one relying on the Bratelli-Vershik representation of odometers. To do this, for $k\in \N$ and $0\leq i<p_k$ consider the sets 
    $$U_{k,i}=\{(x_n)\in \Z_{(p_n)}\,:\,x_k=i \}.$$
    The sets $\{U_{k,0},U_{k,1},...,U_{k,p_k-1}\}$ correspond to the floors of the $k$-the Kakutani-Rokhlin partition of the odometer. The action $+\textbf{m}$ on the collection of sets $\{U_{k,0},U_{k,1},...,U_{k,p_k-1}\}$ works like addition by $m$ in $\Z/p_k\Z$ by identifying $U_{k,i}$ with $i\in \Z/p_k\Z$. Thus, $U_{k,i}+\textbf{m}=U_{k,i+m \mod  p_k}$ and $U_{k,i}+r\textbf{m}=U_{k,i}$ if and only if $rm\in p_k\Z$. Take $r_k$ the smallest number such that $U_{k,i}+r\textbf{m}=U_{k,i}$. For a large enough $k$, a minimal component of $(\Z_{p_n},+\textbf{m})$ is a union of elements in $\{U_{k,0},U_{k,1},...,U_{k,p_k-1}\}$ that form a single orbit under the action $+\textbf{m}$. In the language of Bratelli-Vershik diagrams, a minimal component of $(\Z_{p_n},+\textbf{m})$ is the induced system given by the $r$-paths that correspond to the sets $U_{k,i}$ of the level $k$ which are in the same orbit under the action $+\textbf{m}$. This becomes more apparent after the proof of \autoref{topgeneralcase} using the map $\pi$ in \autoref{11extension}.
\end{remark}

\begin{corollary}\label{corautodom}
The stabilized automorphism group of an odometer is $$\Aut^{(\infty)}(\Z_{(p_n)},+\text{\bf{1}})=\bigcup_{n=1}^\infty \Aut(\Z_{(p_n)},+\text{\bf{p$_n$}})$$
where the union is taken inside $\operatorname{Homeo}(\Z_{(p_n)})$. Additionally, this statement is true for all scales equivalent to $(p_n)$.
\end{corollary}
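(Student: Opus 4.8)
The plan is to deduce \autoref{corautodom} directly from \autoref{generalodom}, which already does all the work of understanding the groups $\Aut(\Z_{(p_n)},+\textbf{m})$. First I would recall that by \autoref{tree} the stabilized automorphism group is the increasing union $\bigcup_{m=1}^\infty \Aut(\Z_{(p_n)},+\textbf{m})$ taken inside $\operatorname{Homeo}(\Z_{(p_n)})$, since $\Aut(X,T^i)\subseteq\Aut(X,T^j)$ whenever $i\mid j$, and these inclusions make the collection $\{\Aut(\Z_{(p_n)},+\textbf{m})\}_{m\in\N}$ a directed system under divisibility whose union equals the colimit. So it suffices to show that the two directed families $\{\Aut(\Z_{(p_n)},+\textbf{m}): m\in\N\}$ and $\{\Aut(\Z_{(p_n)},+\textbf{p}_m): m\in\N\}$ are cofinal in one another inside $\operatorname{Homeo}(\Z_{(p_n)})$; a cofinal subsystem of a directed system has the same union.

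The key step is a two-sided containment argument. For one direction, each $p_m$ is a natural number, so trivially $\Aut(\Z_{(p_n)},+\textbf{p}_m)$ is one of the groups $\Aut(\Z_{(p_n)},+\textbf{m}')$ and is therefore contained in the union on the left. For the other direction, fix $m\in\N$ and apply \autoref{generalodom}: if $(m,p_n)=1$ for all $n$, then part (i) gives $\Aut(\Z_{(p_n)},+\textbf{m})\cong\Z_{(p_n)}$ and equals $\Aut(\Z_{(p_n)},+\textbf{1})=\Aut(\Z_{(p_n)},+\textbf{p}_1)$ up to the trivial observation that $+\textbf{1}$ is the same as $+\textbf{p}_1$ only when $p_1=1$ — more carefully, $\Aut(\Z_{(p_n)},+\textbf{1})\subseteq\Aut(\Z_{(p_n)},+\textbf{p}_1)$ always holds since $1\mid p_1$, so the containment into the right-hand union is immediate. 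If instead $(m,p_n)\neq 1$ for some $n$, part (ii) produces an index $k\in\N$ with $\Aut(\Z_{(p_n)},+\textbf{m})\subseteq\Aut(\Z_{(p_n)},+\textbf{p}_k)$. In either case $\Aut(\Z_{(p_n)},+\textbf{m})$ is contained in some $\Aut(\Z_{(p_n)},+\textbf{p}_k)$, hence in $\bigcup_{n=1}^\infty\Aut(\Z_{(p_n)},+\textbf{p}_n)$. Combining the two directions yields the equality of the two unions as subsets of $\operatorname{Homeo}(\Z_{(p_n)})$.

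For the final sentence, I would invoke \autoref{odometerisomorphism}: if $(s_n)\sim(p_n)$ then $\Z_{(s_n)}\cong\Z_{(p_n)}$ both algebraically and as topological groups, and such an isomorphism conjugates the odometer $(\Z_{(s_n)},+\textbf{1})$ to $(\Z_{(p_n)},+\textbf{1})$ because it sends $\textbf{1}$ to $\textbf{1}$ (as recorded in \autoref{odometerisomorphism}); conjugate systems have isomorphic stabilized automorphism groups, and in fact the whole argument above only used the scale up to equivalence, so the formula transfers verbatim. I do not anticipate a serious obstacle here: the only point requiring a little care is making the cofinality/colimit bookkeeping precise — that passing to the cofinal subfamily indexed by the $p_n$ genuinely does not change the union inside $\operatorname{Homeo}(\Z_{(p_n)})$ — but this is immediate once the two containments are in hand, since every element of either union lies in a common member of the other.
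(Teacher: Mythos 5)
Your proposal is correct and follows essentially the same route as the paper: the paper's proof likewise deduces from \autoref{generalodom} that every $\Aut(\Z_{(p_n)},+\textbf{m})$ is contained in some $\Aut(\Z_{(p_n)},+\textbf{p}_k)$ (the reverse containment being trivial) and then cites \autoref{odometerisomorphism} for the statement about equivalent scales. You simply spell out the cofinality bookkeeping and the case $(m,p_n)=1$ more explicitly than the paper does.
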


\begin{proof}
By \autoref{generalodom}, we have that $\Aut(\Z_{(p_n)},+\textbf{m})\subseteq\bigcup\limits_{n=1}^\infty \Aut(\Z_{(p_n)},+\text{\bf{p$_n$}})$ for all $m\in \N$.
Moreover, by \autoref{odometerisomorphism} this is true for all scales equivalent to $(p_n)$.
\end{proof}

The last ingredient we need before proving our characterization of the stabilized automorphism group of odometers is the following algebraic lemma. This is a basic fact about direct limits, for a proof see for example Proposition 10.3 in \cite{Lang}.

\begin{lemma} \label{algebra}
Let $\{G_i\}_{i\in \N}$ and $\{H_i\}_{i\in \N}$ be groups and $f_{i}\colon G_i\to G_{i+1}$, $k_{i}\colon H_i\to H_{i+1}$ group homomorphisms for all $i\in \N$. Define $\hat{G}$ to be the direct limit $\lim_{\to}G_i$ and $\hat{H}$ to be the direct limit $\lim_{\to}H_i$. If there exist group isomorphisms $\vphi_i\colon G_i\to H_i$, for all $i\in \N$ such that the following diagram commutes 
\begin{center}
\begin{tikzcd}
G_i \arrow[r, "f_i"] \arrow[d, "\varphi_i"] & G_{i+1} \arrow[d, "\varphi_{i+1}"] \\
H_{i} \arrow[r, "k_i"]                    & H_{i+1}                     
\end{tikzcd}
\end{center}then $\hat{G}$ and $\hat{H}$ are isomorphic as groups.
\end{lemma}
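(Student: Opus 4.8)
The plan is to argue purely formally, using the universal property of the direct limit rather than juggling explicit representatives, since the hypothesis of commuting squares is exactly what is needed to feed into that universal property. Recall that $\hat G=\lim_{\to}G_i$ is equipped with canonical homomorphisms $\alpha_i\colon G_i\to\hat G$ satisfying $\alpha_{i+1}\circ f_i=\alpha_i$ for every $i$, and that it is universal with this property: for any group $K$ and any family $\theta_i\colon G_i\to K$ with $\theta_{i+1}\circ f_i=\theta_i$, there is a unique homomorphism $\theta\colon\hat G\to K$ with $\theta\circ\alpha_i=\theta_i$ for all $i$. The analogous statement holds for $\hat H=\lim_{\to}H_i$ with canonical maps $\beta_i\colon H_i\to\hat H$.

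First I would build a homomorphism $\Phi\colon\hat G\to\hat H$. Consider the family $\beta_i\circ\vphi_i\colon G_i\to\hat H$. Using the commuting square $\vphi_{i+1}\circ f_i=k_i\circ\vphi_i$ together with $\beta_{i+1}\circ k_i=\beta_i$, one gets $(\beta_{i+1}\circ\vphi_{i+1})\circ f_i=\beta_{i+1}\circ k_i\circ\vphi_i=\beta_i\circ\vphi_i$, so this family is compatible with the system $(G_i,f_i)$, and the universal property of $\hat G$ produces a unique homomorphism $\Phi\colon\hat G\to\hat H$ with $\Phi\circ\alpha_i=\beta_i\circ\vphi_i$ for all $i$. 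Next, since each $\vphi_i$ is an isomorphism, the original square rearranges to $f_i\circ\vphi_i\inv=\vphi_{i+1}\inv\circ k_i$, i.e. the maps $\vphi_i\inv$ form a compatible family of isomorphisms from $(H_i,k_i)$ to $(G_i,f_i)$; running the same construction with the roles of $G$ and $H$ exchanged yields a homomorphism $\Psi\colon\hat H\to\hat G$ with $\Psi\circ\beta_i=\alpha_i\circ\vphi_i\inv$.

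It remains to see $\Phi$ and $\Psi$ are mutually inverse. For every $i$ we compute $\Psi\circ\Phi\circ\alpha_i=\Psi\circ\beta_i\circ\vphi_i=\alpha_i\circ\vphi_i\inv\circ\vphi_i=\alpha_i$, so both $\Psi\circ\Phi$ and $\mathrm{id}_{\hat G}$ are homomorphisms $\hat G\to\hat G$ that restrict to $\alpha_i$ on each $G_i$; by the uniqueness clause of the universal property they coincide, $\Psi\circ\Phi=\mathrm{id}_{\hat G}$, and symmetrically $\Phi\circ\Psi=\mathrm{id}_{\hat H}$. Hence $\Phi$ is the required group isomorphism.

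There is essentially no obstacle: the statement is entirely formal, and the only point needing any care is the well-definedness of the maps induced out of a direct limit — which is precisely what the universal property packages for us. (One could instead argue with explicit representatives, setting $\Phi([g]_i)=[\vphi_i(g)]_i$ for $g\in G_i$ and checking independence of the representative directly from the commuting squares, but the universal-property argument sidesteps all of that bookkeeping. In the text this lemma may simply be invoked from Proposition 10.3 of \cite{Lang}.)
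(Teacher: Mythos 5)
Your proof is correct: the universal-property argument, with $\Phi$ induced by the compatible family $\beta_i\circ\varphi_i$ and $\Psi$ by $\alpha_i\circ\varphi_i^{-1}$, and mutual inverseness forced by the uniqueness clause, is the standard proof of this fact. The paper itself gives no argument for this lemma and simply cites Proposition 10.3 of Lang (as you anticipate in your closing remark), so your write-up supplies exactly the proof that citation stands in for; there is nothing to object to.
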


\begin{theorem}\label{preciseodom}

The stabilized automorphism group of an odometer $\Z_{(p_n)}$ with scale $(p_n)$ is isomorphic to the direct limit of the following sequence
\begin{center}
    \begin{tikzcd}
\Z_{(p_n)} \arrow[r, "j_0"]                                     & \Z_{(p_{n+1})/p_1}^{p_1} \rtimes \Sym({p_1}) \arrow[r, "j_1"]               & \Z_{(p_{n+2})/p_2}^{p_2} \rtimes \Sym({p_2}) \arrow[r, "j_2"]               & \Z_{(p_{n+3})/p_3}^{p_3} \rtimes \Sym({p_3}) \arrow[r, "j_3"]               & \dots
\end{tikzcd}
\end{center}
 where $j_k$  are injective maps.
\end{theorem}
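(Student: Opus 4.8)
The plan is to assemble \autoref{preciseodom} from the ingredients already in place: \autoref{corautodom} reduces $\Aut^{(\infty)}(\Z_{(p_n)},+\textbf{1})$ to the increasing union $\bigcup_n \Aut(\Z_{(p_n)},+\textbf{p}_n)$ inside $\operatorname{Homeo}(\Z_{(p_n)})$, which by \autoref{tree} is the direct limit of the chain $\Aut(\Z_{(p_n)},+\textbf{p}_1)\subseteq \Aut(\Z_{(p_n)},+\textbf{p}_2)\subseteq\cdots$ with inclusion maps (note $p_k$ divides $p_{k+1}$, so these really are inclusions). So it suffices to identify each $\Aut(\Z_{(p_n)},+\textbf{p}_k)$ with $\Z_{(p_{n+k})/p_k}^{p_k}\rtimes\Sym(p_k)$ in a way compatible with the inclusions, i.e. to produce isomorphisms $\vphi_k$ making the squares of \autoref{algebra} commute, and then invoke \autoref{algebra}.

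The first step is to pin down each vertex. Apply part (ii) of \autoref{generalodom} with $m=p_k$: here $m$ divides $p_k$, so $s=p_k$, $t=1$, and the proof of (ii) already shows via \autoref{semidirectproduct} that $\Aut(\Z_{(p_n)},+\textbf{p}_k)\cong \Z_{(w_n)}^{p_k}\rtimes\Sym(p_k)$ where $(w_n)$ is the scale of each minimal component of $(\Z_{(p_n)},+\textbf{p}_k)$; by \autoref{lemmageneralodom} (and decomposition \eqref{odomdecomp}) that component is the odometer with scale $(p_{n+k}/p_k)$. This gives the vertices; I should also record $k=0$ as the base case, where $(\Z_{(p_n)},+\textbf{p}_0)=(\Z_{(p_n)},+\textbf{1})$ and \autoref{isomautodom} gives $\Z_{(p_n)}$, matching the degenerate semidirect product $\Z_{(p_{n+0})/p_0}^{1}\rtimes\Sym(1)$.

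The main work — and the main obstacle — is choosing the isomorphisms $\vphi_k\colon \Aut(\Z_{(p_n)},+\textbf{p}_k)\to \Z_{(p_{n+k})/p_k}^{p_k}\rtimes\Sym(p_k)$ coherently, because the isomorphism $\chi$ of \autoref{semidirectproduct} is explicitly \emph{not} canonical: it depends on the labeling of the $p_k$ minimal components of $(\Z_{(p_n)},+\textbf{p}_k)$ and on a choice of conjugacies between them. The fix is to make a single consistent set of choices using the nested structure of the components. Label the minimal components of $(\Z_{(p_n)},+\textbf{p}_k)$ by their common first $k$ coordinates (equivalently, by the residue $r\in\Z/p_k\Z$ of the first coordinate, as in the proof of \autoref{lemmageneralodom}); refining from level $k$ to level $k+1$ subdivides each component according to residue mod $p_{k+1}$, which is the standard tower map $\Z/p_{k+1}\Z\to\Z/p_k\Z$. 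Take as the fixed conjugacy of the $r$-th component with the $0$-th component the translation $y\mapsto y-\textbf{r}$ composed with the division-by-$p_k$ homeomorphism of the proof of \autoref{lemmageneralodom}; these are manifestly compatible across levels. With these choices, an automorphism $\vphi\in\Aut(\Z_{(p_n)},+\textbf{p}_k)$ viewed inside $\Aut(\Z_{(p_n)},+\textbf{p}_{k+1})$ permutes the finer components in a way that covers its permutation of the coarser ones and acts on each fine component by the induced restriction; unwinding the formulas shows $\vphi_{k+1}\circ(\text{inclusion}) = j_k\circ\vphi_k$ where $j_k$ sends $((a_r)_{r\in\Z/p_k\Z},\pi)$ to the element whose $\Sym(p_{k+1})$-part is the permutation induced by $\pi$ on $\Z/p_{k+1}\Z$ (permuting blocks of size $p_{k+1}/p_k$) and whose $\Z_{(p_{n+k+1})/p_{k+1}}$-coordinates are obtained from the $a_r$ by further restricting each to its $p_{k+1}/p_k$ sub-components. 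One checks $j_k$ is a well-defined group homomorphism (using the multiplication formula \eqref{multiplication}) and that it is injective, since the inclusions it models are injective. Finally, \autoref{algebra} applied to $G_k=\Aut(\Z_{(p_n)},+\textbf{p}_k)$, $H_k=\Z_{(p_{n+k})/p_k}^{p_k}\rtimes\Sym(p_k)$, $f_k$ the inclusion, $k_k=j_k$, and the isomorphisms $\vphi_k$ yields that $\lim_{\to}H_k\cong \lim_{\to}G_k = \Aut^{(\infty)}(\Z_{(p_n)},+\textbf{1})$, which is the claim. I expect the bookkeeping in verifying the square commutes and $j_k$ respects the semidirect-product multiplication to be the only delicate part; everything else is a direct appeal to the cited results.
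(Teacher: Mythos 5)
Your skeleton is the same as the paper's: reduce to the union $\bigcup_k\Aut(\Z_{(p_n)},+\textbf{p}_k)$ via \autoref{corautodom} and \autoref{tree}, identify each term with $\Z_{(p_{n+k})/p_k}^{p_k}\rtimes \Sym(p_k)$ via \autoref{generalodom}(ii) together with \autoref{lemmageneralodom}, and finish with \autoref{algebra}. However, the step where you make the connecting maps explicit would fail as written. You assert that $j_k$ sends $((a_r)_{r\in\Z/p_k\Z},\pi)$ to an element whose $\Sym(p_{k+1})$-part is the permutation induced by $\pi$ alone on blocks of size $p_{k+1}/p_k$. This is not true: how an automorphism permutes the level-$(k+1)$ components depends on the $a_r$'s as well, not just on $\pi$. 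Concretely, pick $k$ with $p_{k+1}>p_k$ and consider the translation $+\textbf{p}_k$, which lies in $\Aut(\Z_{(p_n)},+\textbf{p}_k)$. It fixes each minimal component $\{y:y_k=r\}$ of $+\textbf{p}_k$, so with your choices $\vphi_k(+\textbf{p}_k)=((\textbf{1},\dots,\textbf{1}),e)$ and your formula forces $j_k(\vphi_k(+\textbf{p}_k))$ to have trivial permutation part. But the minimal components of $+\textbf{p}_{k+1}$ are the sets $\{y:y_{k+1}=r'\}$, and $+\textbf{p}_k$ maps the component $r'$ to the component $r'+p_k \bmod p_{k+1}$, a nontrivial permutation; hence $\vphi_{k+1}(i_k(+\textbf{p}_k))$ has nontrivial $\Sym(p_{k+1})$-part and your square $\vphi_{k+1}\circ i_k=j_k\circ\vphi_k$ does not commute. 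A correct explicit formula is of wreath-product, carry-the-remainder type: the $\Sym(p_{k+1})$-part is determined by $\pi$ together with the residue of each $a_r$ modulo $p_{k+1}/p_k$, and the new odometer coordinates are the corresponding quotients.

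The obstacle you set up for yourself (coherent choices to tame the non-canonicity of $\chi$ in \autoref{semidirectproduct}) is not actually needed, and the paper avoids it: fix any isomorphisms $\vphi_k\colon\Aut(\Z_{(p_n)},+\textbf{p}_k)\to\Z_{(p_{n+k})/p_k}^{p_k}\rtimes\Sym(p_k)$ and simply \emph{define} $j_k:=\vphi_{k+1}\circ i_k\circ\vphi_k\inv$, with $i_k$ the inclusion. Then the squares in \autoref{algebra} commute by construction, $j_k$ is injective because it is a composition of injective maps, and the theorem only claims the existence of injective connecting maps, so no explicit formula is required. With that repair (or with the corrected wreath-type formula), the rest of your argument goes through and coincides with the paper's proof.
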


The injective maps $j_k$ from the previous theorem are constructed explicitly as follows. Let $\vphi\colon \Aut(\Z_{(p_n)},+\text{\bf{$p_k$}})\to \Z_{(p_{n+k})}^{p_k} \rtimes \Sym(p_k)$, for all $k\in\N\cup\{0\}$ be the isomorphisms described in \autoref{generalodom}, define $j_k=\vphi_{k+1}\circ i_k\circ \vphi_k\inv$, where $i_k\colon\Aut(X,\sigma^k)\mapsto\Aut(X,\sigma^{k+1})$ is the natural inclusion.

 \begin{proof}
By \autoref{corautodom}, the stabilized automorphism group of $\Z_{p_n}$ is $\Aut^{(\infty)}(\Z_{(p_n)},+\text{\bf{1}})=\bigcup_{n=1}^\infty \Aut(\Z_{(p_n)},+\text{\bf{p$_n$}})$
where the union is taken inside $\operatorname{Homeo}(\Z_{(p_n)})$. This is equivalent to taking the direct limit of the following diagram. 
\begin{center}
\begin{tikzcd}
{\Aut (\Z_{(p_n)},+\text{\bf{1}})} \arrow[r,"i_0", hook] & {\Aut (\Z_{p_{n}},+\text{\bf{p$_1$}})} \arrow[r,"i_1", hook] & {\Aut (\Z_{p_{n}},+\text{\bf{p$_2$}})} \arrow[r, "i_2", hook]  & {\Aut (\Z_{p_{n}},+\text{\bf{p$_3$}})} \arrow[r, "i_3", hook] & \dots
\end{tikzcd}
\end{center}

So, we have the following commutative diagram \begin{center}
  \begin{tikzcd}
{\Aut (\Z_{(p_n)},+\text{\bf{1}})} \arrow[r,"i_0", hook] \arrow[d, "\vphi_0"] & {\Aut (\Z_{p_{n}},+\text{\bf{p$_1$}})} \arrow[r,"i_1", hook] \arrow[d, "\vphi_1"] & {\Aut (\Z_{p_{n}},+\text{\bf{p$_2$}})} \arrow[r, "i_2", hook] \arrow[d, "\vphi_2"] & {\Aut (\Z_{p_{n}},+\text{\bf{p$_3$}})} \arrow[r, "i_3", hook] \arrow[d, "\vphi_3"] & \dots \\
\Z_{(p_n)} \arrow[r, "j_0"]                                     & \Z_{(p_{n+1})}^{p_1} \rtimes \Sym(p_1) \arrow[r, "j_1"]               & \Z_{(p_{n+2})}^{p_2} \rtimes \Sym(p_2) \arrow[r, "j_2"]               & \Z_{(p_{n+3})}^{p_3} \rtimes \Sym(p_3) \arrow[r, "j_3"]               & \dots
\end{tikzcd}  
  \end{center}
where the direct limit of the top row defines the stabilized automorphism group of $\Z_{(p_n)}$. By \autoref{algebra}, we conclude that this direct limit is equal to the direct limit of the bottom row which is what we wanted to prove.
 \end{proof}
 
As a direct corollary of \autoref{preciseodom}, since amenability is preserved under direct limits we conclude \autoref{amenable} for the case of odometers.

\section{The stabilized automorphism group of a Toeplitz subshift} \label{sectiontop}
This section is devoted to the proof of \autoref{theoremtoeplitz}. We begin our study of the stabilized automorphism group of Toeplitz subshifts by proving the following proposition.

\begin{proposition}\label{topgeneralcase}
Let $(X,\sigma)$ be a Toeplitz subshift with period structure $(p_n)$ and set $m\in \N$. Let $d> 0$ be such that for some $k_0\in \N$ we have that $(p_k,m)=d$ for $k\geq k_0$ and $k_0$ is the smallest integer with this property. Then, there exists a Toeplitz subshift $(T,\tau)$ with period structure $(\frac{p_n}{d})_{n\geq k_0}$ such that $(X,\sigma^m)$ has $d$ minimal components each of them conjugate to $(T,\tau)$. Furthermore, $\Aut(X,\sigma^m)\cong \Aut(T,\tau)^d\rtimes \Sym(d)$ and is isomorphic to a subgroup of $\Aut(\Z_{(p_n/d)_{n\geq k_0}},+\textbf{d})$.
\end{proposition}

\begin{proof}

Define $A_j^i$ as in \autoref{skeletonlemma}. By property (iii) of this lemma, we have that $\sigma^m$ permutes the elements in $\{\overline {A_0^{i}},\overline {A_{1}^i},...,\overline {A_{p_{i-1}}^{i}}\}$ as $\sigma^m(\overline{A_j^{i}})=\overline{A_{j+m\mod p_{i}}^{i}}$.  Hence, for each $i\in \N$, the smallest integer $r_i$ such that $\sigma^{r_im}(\overline{A_j^{i}})=\overline{A_j^{i}}$ is $r_i$ such that 
\begin{align*}r_im=\operatorname{lcm}\{m,p_i\}=\frac{mp_i}{(m,p_i)}. 
\end{align*}

Since $k_0$ is the smallest integer such that we have that $(p_k,m)=d$ for $k\geq k_0$, by property (ii) in \autoref{skeletonlemma}, we have that for any $i,j$ the orbit of $\overline{A_j^i}$ under $\sigma^m$ can be expressed as the union of $r_{k_0}$ elements in $\{\overline {A_0^{k_0}},\overline {A_{1}^{k_0}},...,\overline {A_{p_{{k_0}-1}}^{k_0}}\}$. 

Define $U_i=\mathcal{O}_{\sigma^m}(\overline{A_i^{k_0}})$ for $i=1,...,d$. (Notice that $d=(p_{k_0},m)=p_{k_0}/r_{k_0}$.) Since $(X,\sigma)$ is minimal, using property (ii) of \autoref{skeletonlemma} we can show that every orbit in $(U_i,\sigma|_{U_i})$ is dense for $i=1,...,d$, i.e $(U_i,\sigma|_{U_i})$ is minimal. Hence, $(X,\sigma)$ has $d$ minimal components. In particular, if $(m,p_n)=1$ for all $n\in \N$ then $(X,\sigma^m)$ is minimal.

By \autoref{generalodom}, since $(X,\sigma)$ is an almost one-to-one extension of $(\Z_{(p_n)},+\textbf{1})$ and since $(\Z_{(p_n)},+\textbf{1})$ has exactly $d$ minimal components we conclude that every $U_i$ is the inverse image under the almost one-to-one extension map from $X$ to $\Z_{(p_n)}$ of a minimal component of $(\Z_{(p_n)},+\textbf{m})$. Since every minimal component on $(\Z_{(p_n)},+\textbf{m})$ is conjugate to the odometer with scale $(\frac{p_n}{d})_{n\geq k_0}$ we have that $(U_i,\sigma|_{U_i})$ is an almost one-to-one extension of the odometer $\Z_{\frac{p_n}{d})_{n\geq k_0}}$. Since $(X,\sigma)$ is a symbolic system, so is $(X,\sigma^m)$. Hence, since $\sigma$ is a conjugacy between the minimal components of $(X,\sigma^m)$, by \autoref{caracToep}, we conclude that there exists a Toeplitz subshift $(T,\tau)$ with period structure $({\frac{p_n}{d})_{n\geq k_0}}$ such that $(U_i,\sigma|_{U_i})$ is is conjugate to $(T,\tau)$ for $i=1,...,d$. By \autoref{semidirectproduct}, we conclude 
$$\Aut(X,\sigma^m)\cong \Aut(T,\tau)^d\rtimes \Sym(d).$$

\end{proof}

\begin{remark}
    As stated in \autoref{BVtranslation}, one can use the map $\pi$ in \autoref{11extension} to construct an explicit representation of the minimal components of $(\Z_{p_n},+\textbf{m})$ without modifying the period structure by taking $\pi\inv(U_i)$ for $i=1,...,d$.
\end{remark}

So far, we have shown that  if $(X,\sigma)$ be a Toeplitz subshift with period structure $(p_n)$ and $(m,p_n)=1$ for all $n\in \N$, then 
$\Aut(X,\sigma)\cong\Aut(X,\sigma^m).$ We will turn this statement into an equality in the following proposition.

\begin{proposition}
    Let $(X,\sigma)$ be a Toeplitz subshift with period structure $(p_n)$. If $(m,p_n)=1$ for all $n\in \N$, then 
$$\Aut(X,\sigma)=\Aut(X,\sigma^m).$$
\end{proposition}

\begin{proof}
     We know $\Aut(X,\sigma)\subseteq\Aut(X,\sigma^m).$ We now prove the other inclusion. Let $\vphi\in\Aut(X,\sigma^m)$. We must show $\phi\circ\sigma(x)=\sigma\circ\vphi(x)$, for all $x\in X$. Let $R$ be the set of regular points in $X$ defined as in \autoref{toepsec}. Since $R$ is a dense $G_\delta$ subset of $X$ and $\vphi$ is a homeomorphism $\vphi\inv(R)\cap R$ is a dense $G_\delta$ set by Baire's Category Theorem. Hence it is enough to prove this statement for $x\in \vphi\inv(R)\cap R$ by continuity of $\vphi$ and $\sigma$.

Let $x\in \vphi\inv(R)\cap R$. Notice $x$  and $\vphi(x)$ are both Toeplitz sequences. We show $|\vphi\circ\sigma(x)-\sigma\circ\vphi(x)|=0$. Let $\eps_i$ be a decreasing sequence of positive numbers such that $\eps_i\to 0$. For each $\eps_i$ define $M_i\in\N$ to be an integer such that for any two elements $z,y\in X$ if $z_i=y_i$ for all $|i|\leq M$ then $|z-y|\leq \eps_i.$ Let $p_s$ be the largest period of the coordinates $x_j$ with $|j|\leq M_i+1$ of $x$. Let $p_\ell$ be the largest period of the coordinates $\vphi(x)_j$ with $|j|\leq M_i+1$ of $\vphi(x)$. Notice that $p_s$ divides $ p_\ell$
or $p_\ell$ divides $ p_s$, so fix $\hat p_i$ the larger of the two. Since $(m,\hat p_i)=1$ there exist integers $a_i, b_i$ such that $a_i=1+b\hat p_i$. Then we have that 
\begin{align}
    &\sigma^{a_im}(x)\to \sigma(x),\qquad\text{ as $i\to \infty$}\label{con1}\\&\sigma^{a_im}(\vphi(x))\to \sigma(\vphi(x)),\qquad\text{ as $i\to \infty$},\label{con2}
\end{align}
by our construction of the $a_i$'s. Notice we have the following inequality 
\begin{align*}
    d(\vphi\circ\sigma(x),\sigma\circ\vphi(x))&\leq  d
    (\sigma\circ\vphi(x),\sigma^{a_im}(\vphi(x)))+d(\sigma^{a_im}(\vphi(x)),\vphi\circ\sigma(x))\\&= d(\sigma(\vphi(x)),\sigma^{a_im}(\vphi(x)))+d(\vphi(\sigma^{a_im}(x)),\vphi(\sigma(x))).
\end{align*}
By equations \ref{con1} and \ref{con2} and since $\vphi$ is continuous, the right hand side goes to $0$ as $i\to\infty$. Thus $|\vphi\circ\sigma(x)-\sigma\circ\vphi(x)|=0$. Since $\vphi\inv(R)\cap R$ is a $G_\delta$ subset of $X$ and $\vphi$ is continuous, we can conclude that $\vphi\circ\sigma(x)=\sigma\circ\vphi(x)$, for all $x\in X$. Hence, $\vphi\in\Aut(X,\sigma)$.

\end{proof}

\begin{theorem}\label{precisetop}
Let $(X, \sigma)$ be a Toeplitz subshift with period structure $(p_n)$. Then, the stabilized
automorphism group of $(X, \sigma)$ is the direct limit of the sequence

\begin{center}
    \begin{tikzcd}
{\Aut (X,\sigma)} \arrow[r,hook] & {\Aut (X,\sigma^{p_1})} \arrow[r,hook] & {\Aut (X,\sigma^{p_2})} \arrow[r,hook] & {\Aut (X,\sigma^{p_3})} \arrow[r,hook] & \dots
\end{tikzcd}
\end{center}where the maps are the natural inclusion of each automorphism group into the next. \end{theorem}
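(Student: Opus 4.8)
The plan is to reduce the theorem to the set-theoretic identity $\Aut^{(\infty)}(X,\sigma)=\bigcup_{n=1}^{\infty}\Aut(X,\sigma^{p_n})$, with the union taken inside $\operatorname{Homeo}(X)$. Indeed, granting this identity: since $p_n$ divides $p_{n+1}$ we have $\Aut(X,\sigma^{p_n})\subseteq\Aut(X,\sigma^{p_{n+1}})$ by \autoref{tree}, so the right-hand side is a nested union of groups along a chain of group inclusions, which is exactly the direct limit of that chain; and the left-hand side is $\bigcup_{m\in\N}\Aut(X,\sigma^m)$ by definition (and $\Aut(X,\sigma)\subseteq\Aut(X,\sigma^{p_1})$, so adjoining $\Aut(X,\sigma)$ at the front of the chain does not change the colimit, matching the displayed sequence). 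Hence the direct limit of the sequence in the statement is $\Aut^{(\infty)}(X,\sigma)$. This is the Toeplitz counterpart of the route used for odometers in \autoref{corautodom} and \autoref{preciseodom}; note that since we keep each $\Aut(X,\sigma^{p_n})$ abstract, no analogue of \autoref{algebra} is needed here.

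To prove the identity, I would fix $m\in\N$ and exhibit some $k$ with $\Aut(X,\sigma^m)\subseteq\Aut(X,\sigma^{p_k})$; combined with the obvious inclusions $\Aut(X,\sigma^{p_k})\subseteq\Aut^{(\infty)}(X,\sigma)$ this gives both containments of the identity. For this I invoke \autoref{topgeneralcase}. If $m$ is relatively prime to every $p_n$, part (i) gives $\Aut(X,\sigma^m)=\Aut(X,\sigma)\subseteq\Aut(X,\sigma^{p_1})$. Otherwise, write $m=a_1^{b_1}\cdots a_l^{b_l}$, $r_i=\min\{b_i,\operatorname{\bf{v}}_{a_i}((p_n))\}$, $M=a_1^{r_1}\cdots a_l^{r_l}$ and $t=m/M$ as in part (ii). Since $r_i\leq\operatorname{\bf{v}}_{a_i}((p_n))$, each $a_i^{r_i}$ divides $p_n$ for all large $n$, so I may pick $k$ with $M\mid p_k$; then $\Aut(X,\sigma^m)=\Aut(X,\sigma^{M})\subseteq\Aut(X,\sigma^{p_k})$, where the equality is the one established in the proof of \autoref{topgeneralcase}(ii) and the inclusion is \autoref{tree} applied to $M\mid p_k$.

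The step I expect to be the real obstacle is the equality $\Aut(X,\sigma^m)=\Aut(X,\sigma^{M})$ in the mixed case; everything else is formal manipulation with divisibility and colimits. That equality is already packaged inside \autoref{topgeneralcase}: after cutting $(X,\sigma^{M})$ into its minimal components — which by \autoref{skeletonlemma} are Toeplitz subshifts with period structure $(p_{n+\ell}/p_\ell)$, hence relatively prime to $t=m/M$ — part (i) shows the $t$-th power does not enlarge the automorphism group of any component, and \autoref{semidirectproduct} reassembles these into the statement for the whole system. If one prefers not to cite an equality proved in passing, the alternative I would run is: $m\mid p_k t$, hence $\Aut(X,\sigma^m)\subseteq\Aut(X,\sigma^{p_k t})$ by \autoref{tree}, and then $\Aut(X,\sigma^{p_k t})=\Aut(X,\sigma^{p_k})$ by applying the Baire-category and regular-points argument of \autoref{topgeneralcase}(i) to each of the $p_k$ minimal components of $(X,\sigma^{p_k})$, whose period structures are all coprime to $t$, and combining via \autoref{semidirectproduct}.
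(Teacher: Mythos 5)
Your proposal is correct and follows essentially the same route as the paper: reduce to showing $\Aut(X,\sigma^m)\subseteq\bigcup_{n}\Aut(X,\sigma^{p_n})$ for every $m$, handle the coprime case by \autoref{topgeneralcase}(i), and otherwise use the equality $\Aut(X,\sigma^m)=\Aut(X,\sigma^{M})$ from \autoref{topgeneralcase}(ii) together with $M\mid p_k$ for some $k$. Your extra remarks (the union-of-a-nested-chain-equals-colimit observation and the alternative argument via $m\mid p_k t$) only make explicit what the paper leaves implicit.
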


Observe that in \autoref{topgeneralcase} we described $\Aut(X,\sigma^{p_n})$ for all $n$.

\begin{proof}
Let $m\in \N$. If $(m,p_n)=1$ for all $n\in \N$ by part (i) of \autoref{topgeneralcase} $\Aut(X,\sigma^m)=\Aut(X,\sigma)$. Hence, $\Aut(X,\sigma^m)\subseteq \bigcup\limits_{n=1}^\infty \Aut(X,\sigma^{p_n})$. If $(m,p_n)\neq 1$ take 
$M=\lim_{k\to \infty}\operatorname{lcm}(m,p_k)$. By \autoref{topgeneralcase} $\Aut(X,\sigma^{m})=\Aut(X,\sigma^{M})$. By our construction of $M$, there exists $k\in \N$ such that $M$ divides $ p_k$. Hence, $\Aut(X,\sigma^{m})=\Aut(X,\sigma^{M})\subseteq\Aut(X,\sigma^{p_k})$. This implies, $\Aut(X,\sigma^m)\subseteq \bigcup\limits_{n=1}^\infty \Aut(X,\sigma^{p_n})$.
\end{proof}

As a direct corollary of \autoref{precisetop}, since amenability is preserved under direct limits and Toeplitz subshifts have abelian automorphism groups we conclude \autoref{amenable} for the case of Toeplitz subshifts.

\section{Invariance of the stabilized automorphism group for odometers and Toeplitz subshifts up to scale equivalence} \label{sectioninva}
This section is dediacated to proving \autoref{inva}.

\subsection{Invariance}

\begin{lemma}\label{lemmaforinva}
Let $(\Z_{(p_n)},+\textbf{1})$ be an odometer with scale ${(p_n)}$ and $q$ be a prime such that $\operatorname{\textbf{v}}_q(p_n)=\infty$. If $\textbf{x}\in\Z_{(p_n)}$ is an element of infinite order, then there exists $\lambda \in \Aut^\infty(\Z_{(p_n)},+\textbf{1})$ such that, for some $k\in \N$, $\lambda$ commutes with $+q^k\textbf{x}$ but not with $+\textbf{x}$.
\end{lemma}
\begin{proof}
Let $\textbf{x}=(x_i) \in\Z_{(p_n)}$ be an element of infinite order and let $N\in \N$ be the first integer such that $q$ divides $p_N$ and $x_N\neq0$. We can always find such an integer since $\textbf{x}$ is not a torsion element and $\textbf{v}_q(p_n)=\infty$. Set $k=\nu_q(x_N)+1$.

Define $p'_n=p_{n+N-1}$ then $\Z_{(p_n)}\cong\Z_{(p'_n)}$ via the isomorphism $(y_i)\mapsto (y_{i+N-1})$. Take $\textbf{x}'=(x_{i+N-1})$. By \autoref{generalodom}, $(\Z_{p'_n},+\textbf{q}^k)$ has $q^k$ minimal components, denote them by $V_1, V_2,...,V_{q^k}$, each a union of sets of the form 
$U_i=\{(y_i)\in \Z_{(p'_n)}:x_1=j\}$
for $j=0,1,...,p_N-1$ and $(U_j,+\textbf{q}|_{U_j})$ is conjugate to an odometer. Let $\vphi$ be a non-trivial element in $\Aut(U_j,+\textbf{q}|_{U_j})$. Let $\lambda$ be the image under the map $\iota$ described in \autoref{semidirectproduct} of the map that acts via $\vphi$ on $V_1$ and the identity on all other minimal components.

Notice that $U_j+\textbf{x}=U_{j+x'_1 \mod p'_1}$. Since $x'_1\nequiv 0  \mod p'_1$ and by our choice of $k$, $+\textbf{x}$ permutes the minimal components $V_j$ in a non trivial permutation corresponding to an element of the subgroup isomorphic to $\Z/q^k\Z$ of the group of permutations of the $V_j$'s identified with $\Sym(q^k).$ On the other hand, $+q^k\textbf{x}$ leaves all of the minimal components $V_j$ invariant. Since odometers are abelian, one can easily see that $+q^k\textbf{x}$ commutes with $\lambda$ but $+\textbf{x}$ does not. \end{proof}

\begin{corollary}\label{corolary2}
Let $(X,\sigma)$ be a Toeplitz subshift with scale ${(p_n)}$ and let $q$ be a prime such that $\textbf{v}_q(p_n)=\infty$. If $x\in\Aut(X,\sigma)$ is an element of infinite order, then there exists $\lambda \in \Aut^\infty(X,\sigma)$ such that $\lambda$ commutes with $x^q$ but not with $x$.
\end{corollary}
\begin{proof}
This proof is identical to the last proof since $\Aut(X,\sigma)$ is isomorphic to a subgroup of an odometer and in the proof of \autoref{lemmaforinva} we only used the existence of a non trivial element in the automorphism group of the minimal components of $(X,T^{k})$ for all $k\in \N$. 
\end{proof}

\begin{theorem}\label{inva}
Let $(\Z_{(p_n)},+\textbf{1})$ and $(\Z_{(q_n)},+\textbf{1})$ be two odometers with scales $(p_n)$ and $(q_n)$ respectively and let $s$ be a prime. If $\textbf{v}_s(p_n)=\infty$ and $\Aut^{(\infty)}(\Z_{(p_n)},+\textbf{1})\cong\Aut^{(\infty)}(\Z_{q_n},+\textbf{1})$, then $\textbf{v}_s(p_n)=\infty$.
\end{theorem}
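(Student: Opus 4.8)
The plan is to contrapose through the invariant $F_n$ from Lemma~\ref{preservepgrowth}, but since $F_n$ alone cannot see which prime is responsible for the torsion, I will combine it with the centralizer-nesting phenomenon isolated in Lemma~\ref{lemmaforinva}. The key observation is that an isomorphism $\vphi\colon \Aut^{(\infty)}(\Z_{(p_n)},+\textbf{1})\to\Aut^{(\infty)}(\Z_{(q_n)},+\textbf{1})$ need not send the distinguished element $+\textbf{1}$ to $+\textbf{1}$, so I first have to understand what $\vphi(+\textbf{1})$ can be. By \autoref{corautodom}, $\vphi(+\textbf{1})$ lies in some $\Aut(\Z_{(q_n)},+\textbf{q}_k)$, so it is of the form $(\textbf{a},\pi)$ under the isomorphism $\Aut(\Z_{(q_n)},+\textbf{q}_k)\cong \Z_{(q_{n+k})/q_k}^{q_k}\rtimes\Sym(q_k)$ of \autoref{generalodom}. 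Because $+\textbf{1}$ has infinite order and generates a subgroup whose centralizer in the stabilized group is as large as possible at every level, $\pi$ is forced to be trivial and $\textbf{a}$ must be a torsion-free element (an element of finite order would have a centralizer that is eventually everything, contradicting that $C(+\textbf{m})$ is a proper ``semidirect-product'' subgroup for $\textbf{m}$ dividing some $p_k$); thus $\gamma:=\vphi(+\textbf{1})$ is translation by an infinite-order element $\textbf{x}\in\Z_{(q_n)}$.

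Next I would set up the numerical comparison. For each prime $p$ and each $n$, $F_{p^n}(\Aut^{(\infty)}(\Z_{(p_n)},+\textbf{1}),+\textbf{1})$ is by \autoref{corautodom} and \autoref{generalodom} the largest order of a finite subgroup of $C((+\textbf{1})^{p^n})=\Aut(\Z_{(p_n)},+\textbf{p}^n\!\text{-component})$; since each minimal component's automorphism group is a torsion-free-or-finite-torsion odometer group, the finite subgroups of $C((+\textbf{1})^{p^n})$ come only from $\Sym(k)$-factors and the finite-torsion part, so this quantity stabilizes in $n$ precisely to a number governed by the torsion of $\Z_{(p_n)}$ together with the number of minimal components available. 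Because both odometers have finite torsion, $\sup_n F_{p^n}$ being infinite for some prime $p$ is equivalent to $\textbf{v}_p(\cdot)=\infty$ (you can find arbitrarily many minimal components, hence arbitrarily large symmetric groups, exactly when $p^N \mid p_k$ for all $N$). By \autoref{preservepgrowth} applied with $T=+\textbf{1}$ and $\vphi$, we get $F_{p^n}(\Aut^{(\infty)}(\Z_{(p_n)},+\textbf{1}),+\textbf{1})=F_{p^n}(\Aut^{(\infty)}(\Z_{(q_n)},+\textbf{1}),\gamma)$ for all $p,n$, where now the right-hand side is computed around $\gamma=+\textbf{x}$ rather than $+\textbf{1}$.

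**The heart of the argument** is \autoref{lemmaforinva}: it says that around the infinite-order element $\textbf{x}\in\Z_{(q_n)}$, for the prime $s$ with $\textbf{v}_s(q_n)=\infty$, one has $C(\gamma^{s})\subsetneq C(\gamma^{s^0})$ strictly, and more to the point one can iterate it to see that $C(\gamma^{s^{n}})$ keeps acquiring new finite (symmetric) subgroups as $n$ grows, so that $\sup_n F_{s^n}(\Aut^{(\infty)}(\Z_{(q_n)},+\textbf{1}),\gamma)=\infty$. Transporting back via the displayed identity, $\sup_n F_{s^n}(\Aut^{(\infty)}(\Z_{(p_n)},+\textbf{1}),+\textbf{1})=\infty$, and by the equivalence established in the previous paragraph (for the finite-torsion odometer $\Z_{(p_n)}$, this supremum is finite unless $\textbf{v}_s(p_n)=\infty$) we conclude $\textbf{v}_s(p_n)=\infty$, as desired.

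**The main obstacle** I anticipate is the rigidity step for $\gamma=\vphi(+\textbf{1})$: ruling out that $\vphi(+\textbf{1})$ has a nontrivial permutation part or a torsion component. One clean way is to note that $+\textbf{1}$ is a \emph{root-closed, maximal} infinite cyclic element in a precise sense — its powers exhaust, up to the equivalences of \autoref{generalodom}, all the $+\textbf{m}$ — and that any element whose permutation part is a nontrivial $\pi\in\Sym(q_k)$, or whose $\Z_{(q_{n+k})/q_k}^{q_k}$-part has a torsion coordinate, fails to have the property that $C(\gamma^{N})$ is a proper subgroup for every $N$ (a torsion element is eventually central). This uses only the explicit semidirect-product description already proved, so it should be a matter of careful bookkeeping rather than a new idea; nonetheless it is where the proof must be written with the most care, and one should double-check the finite-torsion hypothesis is genuinely used here (it guarantees the component automorphism groups contribute only boundedly many torsion elements, so that all unbounded growth of $F_n$ is symmetric-group growth and hence detects $\textbf{v}_s=\infty$ and nothing else).
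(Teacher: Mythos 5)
Your overall strategy (transport the invariant $F_n$ of \autoref{preservepgrowth} through the isomorphism and detect $\textbf{v}_s=\infty$ via the centralizer growth of \autoref{lemmaforinva}, with the finite-torsion hypothesis guaranteeing $F_n<\infty$) is the same as the paper's, but the step you yourself flag as the main obstacle is a genuine gap, and as written the argument does not go through. You assert that $\gamma=\vphi(+\textbf{1})$ must be translation by an infinite-order, torsion-free element of $\Z_{(q_n)}$ with trivial permutation part, justified only by slogans (``centralizer as large as possible at every level'', ``a torsion element is eventually central'', ``its powers exhaust all the $+\textbf{m}$''). An abstract group isomorphism of stabilized groups carries no dynamical information, so nothing forces $\vphi(+\textbf{1})$ into the translation subgroup of any $\Aut(\Z_{(q_n)},+\textbf{q}_k)\cong\Z_{(q_{n+k})/q_k}^{q_k}\rtimes\Sym(q_k)$, and none of the stated properties is proved (nor is it clear they characterize translations). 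The paper avoids this entirely: it fixes $j=s^{\ell}$ with $\ell=\textbf{v}_s((p_n))$ assumed finite, places $\gamma^{j}$ in some $\Z_{(q_{n+k})/q_k}^{q_k}\rtimes\Sym(q_k)$, passes to a further power $\gamma^{jy}$ whose permutation part is trivial, and only uses that \emph{some} coordinate $\gamma_i$ has infinite order (automatic, since $\gamma$ has infinite order); \autoref{lemmaforinva} is then applied to that coordinate to produce an element of $C(\gamma^{jys})\setminus C(\gamma^{jy})$. No rigidity statement about $\vphi(+\textbf{1})$ is needed.

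A second, smaller gap: you claim that ``iterating'' \autoref{lemmaforinva} yields $\sup_n F_{s^n}(\cdot,\gamma)=\infty$. The lemma only produces one new commuting element at each step, not a growth of the maximal order of finite subgroups; to get unbounded $F$-values you would have to redo the lemma's proof quantitatively (exhibiting symmetric subgroups $\Sym(k_n)$ with $k_n\to\infty$ inside $C(\gamma^{s^n})$), and if $\gamma$ is not a translation the exponents $s^n$ must in any case be corrected by a factor $y$ killing the permutation part, so your clean ``$\sup_n F_{s^n}$'' dichotomy needs reworking. Note also that the paper does not need unbounded growth at all: it compares just the two levels $jy$ and $jys$, where on the $(\Z_{(p_n)},+\textbf{1})$ side \autoref{generalodom} forces $C(+\textbf{jys})=\Aut(\Z_{(p_n)},+\textbf{jys})=\Aut(\Z_{(p_n)},+\textbf{jy})=C(+\textbf{jy})$ because $j=s^{\ell}$ already exhausts the $s$-divisibility of $(p_n)$, while the transported strict difference of centralizers (detected through $F_{jys}>F_{jy}$, finite by finite torsion) contradicts this equality. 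If you repair your proof by replacing the rigidity step with the paper's pass-to-a-power argument and make the symmetric-subgroup growth explicit, your sup-based variant can be made to work, but as it stands the two claims above are unproven and essential.
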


\begin{proof}
Proceeding by contradiction, assume $\vphi\colon \Aut^{(\infty)}(\Z_{(p_n)},+\textbf{1}) \to \Aut^{(\infty)}(\Z_{(q_n)},+\textbf{1})$
is a group isomorphism and $\ell=\textbf{v}_s(p_n)<\infty$. Take $j=s^\ell$ and $\gamma=\vphi(+\textbf{1})$. We can assume there exists $k>0$ such that $\gamma^j\in \Aut(\Z_{(q_n)},+\textbf{q}_k)\cong \Z_{(q_{n+k}/q_k)}^{q_k}\rtimes \Sym(q_k)$. With some abuse of notation, we assume $\gamma^j \in \Z_{(q_{n+k}/q_k)}^{q_k}\rtimes \Sym(q_k)$ as opposed to taking the image of $\gamma^j$ under the appropriate isomorphism. We also use $\Aut(\Z_{(q_n)},+\textbf{q}_k)$ and $\Z_{(q_{n+k}/q_k)}^{q_k}\rtimes \Sym(q_k)$ interchangeably (as they are isomorphic) according to the best interpretation required for our reasoning. Define $\pi\colon \Z_{(q_{n+k}/q_k)}^{q_k}\rtimes \Sym(q_k)\to \Sym(q_k)$ to be the canonical projection. 

Let $y\in\N$ be such that $\pi(\gamma^{j\cdot y})=e$. Write $\gamma^{j\cdot y}=((\gamma_1,\gamma_2,...,\gamma_{q_k}),e)$. Since $+\textbf{1}$ is an infinite order element and $\vphi$ is an isomorphism, so is $\gamma$. Hence, there exists $\gamma_i\in \Z_{(q_{n+k}/q_k)}$ such that $\gamma_i$ is an infinite order element in $\Z_{(q_{n+k}/q_k)}$.

Let us restrict our attention to the action of $\gamma_i$ on the $i$-th minimal component of $+\textbf{q}_k$. Since $\textbf{v}_s(p_n)=\infty$, by \autoref{lemmaforinva} there exists an element in $\Aut^\infty(\Z_{(q_n)},+\textbf{1})$ that commutes with $\gamma_i^{y\cdot s}$ but not with $\gamma^y$. By \autoref{semidirectproduct} \autoref{multiplication}, $\gamma^{j\cdot y\cdot s}=((\gamma_1^s,\gamma_2^s,...,\gamma_{q_k}^s),e)$. We conclude, there exists $\lambda$ that commutes with $\gamma^{j\cdot y\cdot s}$ but not with $\gamma^{j\cdot y}$. This is a contradiction to \autoref{generalodom} because it implies $\Aut(\Z_{(p_n)},+{\textbf{j}\cdot \textbf{y}\cdot \textbf{s}})\neq\Aut(\Z_{(p_n)},+{\textbf{j}\cdot \textbf{y}})$. We conclude $\textbf{v}_s(p_n)=\infty$. \end{proof}

\begin{corollary}\label{coro1}
Let $(X,\sigma)$ and $(Y,\tau)$ be two Toeplitz subshifts with scales $(p_n)$ and $(q_n)$ respectively let $s$ be a prime. If $\textbf{v}_s(p_n)=\infty$ and $\Aut^{(\infty)}(X,\sigma)\cong\Aut^{(\infty)}(Y,\tau)$, then $\textbf{v}_s(p_n)=\infty$.
\end{corollary}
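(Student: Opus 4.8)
The plan is to reduce Corollary \ref{coro1} directly to Theorem \ref{inva} by transporting the argument through the maximal equicontinuous factor, exactly as the paper reduces the other Toeplitz statements to their odometer counterparts. First I would recall that by Lemma \ref{topgeneralcase}, for each $n$ the group $\Aut(X,\sigma^{p_n})$ decomposes as $\Aut(T,\tau)^{p_k}\rtimes\Sym(p_k)$ where $(T,\tau)$ is itself a Toeplitz subshift whose period structure is a shift of $(p_n)$; in particular the symmetric factor $\Sym(p_k)$ appears in $\Aut(X,\sigma^{p_k})$ in precisely the same way $\Sym(p_k)$ appears in $\Aut(\Z_{(p_n)},+\mathbf{q}_k)$ in Proposition \ref{semidirectproduct}. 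This is the only structural feature of the odometer case that was used in Lemma \ref{lemmaforinva} and Theorem \ref{inva}, which is why Corollary \ref{corolary2} followed from Lemma \ref{lemmaforinva} verbatim.

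The key steps, in order, are as follows. Assume for contradiction that $\vphi\colon \Aut^{(\infty)}(X,\sigma)\to\Aut^{(\infty)}(Y,\tau)$ is a group isomorphism, that $\mathbf{v}_s(q_n)=\infty$, and that $\ell=\mathbf{v}_s(p_n)<\infty$. Put $j=s^\ell$ and $\gamma=\vphi(\sigma)$, viewed as an element of some $\Aut(Y,\tau^{q_k})\cong\Aut(T,\tau)^{q_k}\rtimes\Sym(q_k)$. Since $\sigma$ has infinite order and $\vphi$ is an isomorphism, $\gamma$ has infinite order, so after passing to a power $y$ that kills the permutation coordinate we may write $\gamma^{jy}=((\gamma_1,\dots,\gamma_{q_k}),e)$ with at least one $\gamma_i\in\Aut(T,\tau)$ of infinite order. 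Now $(T,\tau)$ is a Toeplitz subshift with $\mathbf{v}_s$ of its scale equal to $\infty$ (this is inherited from $(Y,\tau)$, since the scale of $(T,\tau)$ is a shift of $(q_n)$), so Corollary \ref{corolary2} applies to $\gamma_i$: there is an element of $C(\gamma_i^{ys})\setminus C(\gamma_i^{y})$ inside $\Aut^{(\infty)}(T,\tau)$, and hence, using the product formula \eqref{multiplication} of Proposition \ref{semidirectproduct} and $\gamma^{jys}=((\gamma_1^s,\dots,\gamma_{q_k}^s),e)$, an element $\phi\in C(\gamma^{jys})\setminus C(\gamma^{jy})$ in $\Aut^{(\infty)}(Y,\tau)$. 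On the other hand, Toeplitz subshifts have abelian automorphism groups, so for every $n$ the group $\Aut(X,\sigma^n)$ is abelian with trivial torsion apart from the $\Sym$ factors, and finite torsion of the scale guarantees $F_n(\Aut^{(\infty)}(X,\sigma),\sigma)<\infty$ for all $n$; by Lemma \ref{preservepgrowth} the functions $F_n$ agree on both sides, and the existence of $\phi$ above produces a strict inequality $F_{jys}(\Aut^{(\infty)}(X,\sigma),\sigma)>F_{jy}(\Aut^{(\infty)}(X,\sigma),\sigma)$. This contradicts Lemma \ref{topgeneralcase}, which forces $\Aut(X,\sigma^{jys})=\Aut(X,\sigma^{jy})$ because already $\Aut(X,\sigma^{m})=\Aut(X,\sigma^{M})$ with $M$ independent of the $t$-part, so in particular the largest finite subgroups of $C(\sigma^{jys})$ and $C(\sigma^{jy})$ coincide. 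The contradiction gives $\mathbf{v}_s(p_n)=\infty$.

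The step I expect to be the main obstacle is verifying that $(T,\tau)$ — the Toeplitz subshift conjugate to a minimal component of $(Y,\tau^{q_k})$ — genuinely inherits the hypothesis $\mathbf{v}_s(\text{scale})=\infty$ and genuinely has finite torsion, so that Corollary \ref{corolary2} and the finiteness of the $F_n$ both apply to it. By Lemma \ref{topgeneralcase} the period structure of $(T,\tau)$ is $(p_{n+\ell}/p_\ell)$ (or an equivalent scale), and shifting and dividing by $p_\ell$ changes each $\mathbf{v}_s$ only by a finite amount, hence preserves both the value $\infty$ and the property of being finite everywhere else; but making this bookkeeping precise — in particular checking that the symmetric subgroups $\Sym(\cdot)$ witnessing the jump in $F_n$ sit inside $\Aut^{(\infty)}(Y,\tau)$ compatibly with the identification $\Aut(Y,\tau^{q_k})\cong\Aut(T,\tau)^{q_k}\rtimes\Sym(q_k)$, and that conjugation by $\vphi$ does not disturb this — is the place where one must be careful. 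Once that is in hand the argument is a word-for-word transcription of the proof of Theorem \ref{inva} with ``$\Aut(T,\tau)$'' in place of ``$\Z_{(q_{n+k}/q_k)}$'' and Corollary \ref{corolary2} in place of Lemma \ref{lemmaforinva}.
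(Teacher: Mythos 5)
Your proposal follows essentially the same route as the paper: a proof by contradiction that transports the argument of Theorem \ref{inva} to the Toeplitz setting via the decomposition $\Aut(Y,\tau^{q_k})\cong\Aut(T,\tau)^{q_k}\rtimes\Sym(q_k)$ from Lemma \ref{topgeneralcase}, invoking Corollary \ref{corolary2} in place of Lemma \ref{lemmaforinva}, using finite torsion to guarantee $F_n<\infty$, and applying Lemma \ref{preservepgrowth} to turn the jump $F_{jys}>F_{jy}$ into a contradiction with the fact that $\Aut(X,\sigma^{jys})=\Aut(X,\sigma^{jy})$. The only caveat is the aside that $\Aut(X,\sigma^n)$ is ``abelian with trivial torsion apart from the $\Sym$ factors'' (it is not abelian when $n$ divides into the scale, and $\Aut(T,\tau)$ may have finite torsion), but this does not affect the argument since finiteness of $F_n$ only needs the torsion of $\Aut(T,\tau)$ to be finite.
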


\begin{proof}
Proceeding by contradiction, assume $\vphi\colon \Aut^{(\infty)}(X,\sigma) \to \Aut^{(\infty)}(Y,\tau)$
is a group isomorphism and $j=\textbf{v}_s(p_n)<\infty$. Take $\gamma=\vphi(\sigma)$. We can assume there exists $k>0$ such that $\gamma^j\in \Aut(Y,\tau^{q_k})\cong \Aut(\hat Y,\hat \tau )^{q_k}\rtimes \Sym(q_k)$ where $(\hat Y,\hat S )$ is a Toeplitz subshift with scale $(q_{n+k}/q_k)$. The rest of the proceeds identically to that of \autoref{inva}, using \autoref{corolary2} to reach a contradiction to \autoref{theoremtoeplitz}. \end{proof}

\begin{corollary}\label{coro2}
Let $(\Z_{(p_n)},+\textbf{1})$ and $(\Z_{(q_n)},+\textbf{1})$ be torsion free odometers with scales $(p_n)$ and $(q_n)$ respectively. If $\Aut^{(\infty)}(\Z_{(p_n)},+\textbf{1})$ and $\Aut^{(\infty)}(\Z_{(q_n)},+\textbf{1})$ are isomorphic as groups, then $(p_n)\sim (q_n)$ and $\Z_{(p_n)}\cong\Z_{(q_n)}$. 
\end{corollary}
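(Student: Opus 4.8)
The plan is to derive this as a direct consequence of \autoref{inva} together with the classification of odometers in \autoref{odometerisomorphism}. Since both $\Z_{(p_n)}$ and $\Z_{(q_n)}$ are torsion free, for every prime $s$ we have $\textbf{v}_s(p_n)\in\{0,\infty\}$ and $\textbf{v}_s(q_n)\in\{0,\infty\}$; in particular both scales are of finite torsion (vacuously, the set of primes with intermediate multiplicity is empty), so \autoref{inva} applies in both directions.

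First I would fix a prime $s$ and show that $\textbf{v}_s(p_n)=\infty$ if and only if $\textbf{v}_s(q_n)=\infty$. Assume $\Aut^{(\infty)}(\Z_{(p_n)},+\textbf{1})\cong\Aut^{(\infty)}(\Z_{(q_n)},+\textbf{1})$. If $\textbf{v}_s(q_n)=\infty$, then \autoref{inva} (with the roles as stated) gives $\textbf{v}_s(p_n)=\infty$. Conversely, if $\textbf{v}_s(p_n)=\infty$, apply \autoref{inva} with the isomorphism $\vphi\inv$, i.e.\ with the roles of $(p_n)$ and $(q_n)$ swapped, to conclude $\textbf{v}_s(q_n)=\infty$. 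By torsion freeness, $\textbf{v}_s(p_n)\neq\infty$ forces $\textbf{v}_s(p_n)=0$, and likewise for $q$. Hence $\textbf{v}_s(p_n)=\textbf{v}_s(q_n)$ for this prime $s$.

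Since $s$ was an arbitrary prime, this shows $\textbf{v}_s(p_n)=\textbf{v}_s(q_n)$ for all primes $s$, which is precisely the definition of $(p_n)\sim(q_n)$. Finally, by \autoref{odometerisomorphism}, equivalence of scales is equivalent to $\Z_{(p_n)}\cong\Z_{(q_n)}$ both algebraically and as topological groups, which completes the proof. The only mild subtlety to be careful about is making sure the hypotheses of \autoref{inva} are genuinely met when we invoke it in the reverse direction — namely that the torsion-free assumption is symmetric in $(p_n)$ and $(q_n)$ and that we are allowed to feed it the inverse isomorphism — but this is immediate since group isomorphism is a symmetric relation and "finite torsion" includes the torsion-free case. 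There is no real obstacle here; the work has all been done in \autoref{inva}.
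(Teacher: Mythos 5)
Your proposal is correct and matches the paper's intent exactly: the paper gives no separate argument for this corollary, treating it as the immediate consequence of applying \autoref{inva} in both directions (legitimate since torsion free implies finite torsion and the isomorphism can be inverted) and then invoking \autoref{odometerisomorphism}, which is precisely what you spell out.
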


We have proved \autoref{finalcor1} and \autoref{finalcor2} by proving \autoref{coro1} and \autoref{coro2}.

\subsection{Limitations}
For the case of torsion free odometers, we have established a full automorphism invariance of the stabilized automorphism group. However, in the case of Toeplitz subshifts we do not get such a strong result. To illustrate this, we present the following example of two Toeplitz sequences that admit $(2^n)$ as a scale (not an essential period structure for the second example) that are not conjugate but our methods fail to identify them as different systems. These examples can be found in \cite{DownSur}.

\begin{example}\label{ex1}
Consider the Toeplitz sequence $u=(u_i)$ with symbols $0$ and $1$ constructed in the following iterative process. First consider the sequence $x^0=(x_i)$ where every entry $x_i=?$, where $?$ indicates a place-holder for the entries that have not yet been determined. We define $x^{j}$ as follows, if $j$ is odd, we fill every second available position in $x^{j-1}$ with $0$; if $j$ is even, we fill every second available position in $x^{j-1}$ with $1$. We define $u$ to be the limit of this process. The following chart depicts the construction of each $x^j$.
\[ 
\begin{matrix*}[r]
x^0=&...&?&?&?&?&?&?&?&?&?&?&?&?&?&?&?&?&?&?&?&?&?&?&?&?&?&?&?&...\\x^1=&...&?&0&?&0&?&0&?&0&?&0&?&0&?&0&?&0&?&0&?&0&?&0&?&0&?&0&?&...\\
x^2=&...&1&0&?&0&1&0&?&0&1&0&?&0&1&0&?&0&1&0&?&0&1&0&?&0&1&0&?&...
\\x^3=&...&1&0&0&0&1&0&?&0&1&0&0&0&1&0&?&0&1&0&0&0&1&0&?&0&1&0&0&...
\\x^4=&...&1&0&0&0&1&0&1&0&1&0&0&0&1&0&?&0&1&0&0&0&1&0&1&0&1&0&0&...
\\\vdots
\\u=&...&1&0&0&0&1&0&1&0&1&0&0&0&1&0&0&0&1&0&0&0&1&0&1&0&1&0&0&...
\end{matrix*} 
\]
\end{example}

\begin{example}\label{ex2}
For this example, consider the Toeplitz sequence $w=(u_i)$ with symbols $0$ and $1$ constructed in a similar iterative process. First consider the sequence $y^0=(y_i)$ where every entry $y_i=?$. To define $y^{j}$ as we fill every second available position in $y^{j-1}$ by alternating between $0$ and $1$. We define $w$ to be the limit of this process. The following chart depicts the construction of each $y^j$.
\[ 
\begin{matrix*}[r]
y^0=&...&?&?&?&?&?&?&?&?&?&?&?&?&?&?&?&?&?&?&?&?&?&?&?&?&?&?&?&...\\y^1=&...&?&0&?&1&?&0&?&1&?&0&?&1&?&0&?&1&?&0&?&1&?&0&?&1&?&0&?&...\\
y^2=&...&0&0&?&1&1&0&?&1&0&0&?&1&1&0&?&1&0&0&?&1&1&0&?&1&0&0&?&...
\\
y^2=&...&0&0&0&1&1&0&?&1&0&0&1&1&1&0&?&1&0&0&0&1&1&0&?&1&0&0&1&...
\\
y^3=&...&0&0&0&1&1&0&0&1&0&0&1&1&1&0&?&1&0&0&0&1&1&0&1&1&0&0&1&...
\\\vdots
\\w=&...&0&0&0&1&1&0&0&1&0&0&1&1&1&0&0&1&0&0&0&1&1&0&1&1&0&0&1&...
\end{matrix*} 
\]
\end{example}

The Toeplitz subshift in Example \ref{ex1} has $(2^n)$ as a period structure and the one in Example \ref{ex2} has period structure $(4^n)$. Hence, both examples have $(2^n)$ as a prime scale. These two systems are not conjugate and neither is a factor of the other (see \cite{DownSur}). Our methods consist on finding the highest order of finite subgroups and how this number increases along different sequences of contentions of the form
$$\Aut(X,\sigma)\subseteq\Aut(X,\sigma^p)\subeq\Aut(X,\sigma^{p^2})\subseteq\Aut(X,\sigma^{p^3})\subseteq...$$
for all primes $p$. Because the Toeplitz subshift in Example \ref{ex2} has period structure $(4^n)$, when we consider $\Aut(X_{w},\sigma^{2^j})$ for $j$ odd, since $2^j$ divides a period, $\Aut(X_{w},\sigma^{2^j})$ contains a subgroup of order $2^j!$. Notice $\Aut(X_{u},\sigma^{2^j})$ also contains a subgroup of order $2^j!$ and in this case $2^j$ is an essential period. The methods we have developed so far, cannot distinguish between these two scenarios.

\bibliographystyle{amsplain}
\bibliography{sample}

\providecommand{\bysame}{\leavevmode\hbox to3em{\hrulefill}\thinspace}
\providecommand{\MR}{\relax\ifhmode\unskip\space\fi MR }
\providecommand{\MRhref}[2]{%
  \href{http://www.ams.org/mathscinet-getitem?mr=#1}{#2}
}
\providecommand{\href}[2]{#2}
\begin{thebibliography}{10}

\bibitem{baake}
M.~Baake and U.~Grimm, \emph{Aperiodic order. {V}ol. 1}, Encyclopedia of
  Mathematics and its Applications, vol. 149, Cambridge University Press,
  Cambridge, 2013, A mathematical invitation, With a foreword by Roger Penrose.
  \MR{3136260}

\bibitem{BLR}
M.~Boyle, D.~Lind, and D.~Rudolph, \emph{The automorphism group of a shift of
  finite type}, Trans. Amer. Math. Soc. \textbf{306} (1988), no.~1, 71--114.
  \MR{927684}

\bibitem{CyrKralinear}
V.~Cyr and B.~Kra, \emph{The automorphism group of a shift of linear growth:
  beyond transitivity}, Forum Math. Sigma \textbf{3} (2015), Paper No. e5, 27.
  \MR{3324942}

\bibitem{CyrKraexponential}
\bysame, \emph{The automorphism group of a minimal shift of stretched
  exponential growth}, J. Mod. Dyn. \textbf{10} (2016), 483--495. \MR{3565928}

\bibitem{ddmp}
S.~Donoso, F.~Durand, A.~Maass, and S.~Petite, \emph{On automorphism groups of
  low complexity subshifts}, Ergodic Theory Dynam. Systems \textbf{36} (2016),
  no.~1, 64--95. \MR{3436754}

\bibitem{ddmpToe}
\bysame, \emph{On automorphism groups of {T}oeplitz subshifts}, Discrete Anal.
  \textbf{11} (2017), 19. \MR{3663120}

\bibitem{DownSur}
T.~Downarowicz, \emph{Survey of odometers and {T}oeplitz flows}, Algebraic and
  topological dynamics, Contemp. Math., vol. 385, Amer. Math. Soc., Providence,
  RI, 2005, pp.~7--37. \MR{2180227}

\bibitem{stab}
Y.~Hartman, B.~Kra, and S.~Schmieding, \emph{{The Stabilized Automorphism Group
  of a Subshift}}, International Mathematics Research Notices (2021), rnab204.

\bibitem{Hedlund}
G.~A. Hedlund, \emph{Endomorphisms and automorphisms of the shift dynamical
  system}, Math. Systems Theory \textbf{3} (1969), 320--375. \MR{259881}

\bibitem{JaK}
K.~Jacobs and M.~Keane, \emph{0-1-sequences of toeplitz type}, Zeitschrift für
  Wahrscheinlichkeitstheorie und Verwandte Gebiete \textbf{13} (1969),
  123--131.

\bibitem{JacobsKeane}
Konrad Jacobs and Michael~S. Keane, \emph{0-1-sequences of toeplitz type},
  Zeitschrift f{\"u}r Wahrscheinlichkeitstheorie und Verwandte Gebiete
  \textbf{13} (1969), 123--131.

\bibitem{K&R}
K.~H. Kim and F.~W. Roush, \emph{On the automorphism groups of subshifts}, Pure
  Math. Appl. Ser. B \textbf{1} (1990), no.~4, 203--230 (1991). \MR{1137698}

\bibitem{kurka}
P.~Kurka, \emph{Topological and symbolic dynamics}, Cours Sp\'{e}cialis\'{e}s
  [Specialized Courses], vol.~11, Soci\'{e}t\'{e} Math\'{e}matique de France,
  Paris, 2003. \MR{2041676}

\bibitem{Lang}
S.~Lang, \emph{Algebra}, third ed., Graduate Texts in Mathematics, vol. 211,
  Springer-Verlag, New York, 2002. \MR{1878556}

\bibitem{Ryan}
J.~P. Ryan, \emph{The shift and commutativity}, Math. Systems Theory \textbf{6}
  (1972), 82--85. \MR{305376}

\bibitem{Salo}
V.~Salo, \emph{Toeplitz subshift whose automorphism group is not finitely
  generated}, Colloquium Mathematicum \textbf{146} (2017), 53--76.

\bibitem{Pent}
S.~Schmieding, \emph{Local {$\mathcal{P}$} entropy and stabilized automorphism
  groups of subshifts}, Invent. Math. \textbf{227} (2022), no.~3, 963--995.
  \MR{4384192}

\bibitem{Williams}
S.~Williams, \emph{Toeplitz minimal flows which are not uniquely ergodic}, Z.
  Wahrsch. Verw. Gebiete \textbf{67} (1984), no.~1, 95--107. \MR{756807}

\end{thebibliography}

\end{document}